\newtheorem*{theorem*}{Theorem}
\crefname{thm}{Theorem}{Theorems}
\Crefname{thm}{Theorem}{Theorems}
\newaliascnt{lemma}{thm}
\newtheorem{lemma}[lemma]{Lemma}
\crefname{lemma}{Lemma}{Lemmas}
\Crefname{lemma}{Lemma}{Lemmas}
\newaliascnt{prop}{thm}
\newtheorem{prop}[prop]{Proposition}
\crefname{prop}{Proposition}{Propositions}
\Crefname{prop}{Proposition}{Propositions}
\newaliascnt{corollary}{thm}
\newtheorem{corollary}[corollary]{Corollary}
\crefname{corollary}{Corollary}{Corollaries}
\Crefname{corollary}{Corollary}{Corollaries}
\newaliascnt{conjecture}{thm}
\newtheorem{conjecture}[conjecture]{Conjecture}
\crefname{conjecture}{Conjecture}{Conjectures}
\Crefname{conjecture}{Conjecture}{Conjectures}
\newaliascnt{notation}{thm}
\crefname{notation}{Notation}{Notations}
\Crefname{notation}{Notation}{Notations}
\newaliascnt{question}{thm}
\crefname{question}{Question}{Questions}
\Crefname{question}{Question}{Questions}
\newaliascnt{obs}{thm}
\crefname{obs}{Observation}{Observations}
\Crefname{obs}{Observation}{Observations}
\newaliascnt{assumption}{thm}
\crefname{assumption}{Assumption}{Assumptions}
\Crefname{assumption}{Assumption}{Assumptions}
\newaliascnt{setting}{thm}
\crefname{setting}{Setting}{Settings}
\Crefname{setting}{Setting}{Settings}
\newaliascnt{construction}{thm}
\newtheorem{construction}[construction]{Construction}
\crefname{construction}{Construction}{Constructions}
\Crefname{construction}{Construction}{Constructions}
\newaliascnt{outline}{thm}
\crefname{outline}{Outline}{Outlines}
\Crefname{outline}{Outline}{Outlines}
\newaliascnt{philosophy}{thm}
\crefname{philosophy}{Philosophy}{Philosophies}
\Crefname{philosophy}{Philosophy}{Philosophies}
\theoremstyle{remark} 
\newaliascnt{remark}{thm}
\crefname{remark}{Remark}{Remarks}
\Crefname{remark}{Remark}{Remarks}
\newaliascnt{example}{thm}
\newtheorem{example}[example]{Example}
\crefname{example}{Example}{Examples}
\Crefname{example}{Example}{Examples}
\theoremstyle{definition} 
\newaliascnt{definition}{thm}
\newtheorem{definition}[definition]{Definition} 
\crefname{definition}{Definition}{Definitions}
\Crefname{definition}{Definition}{Definitions}
\titleformat*{\section}{\normalsize \bfseries \filcenter}
\titleformat*{\subsection}{\normalsize \bfseries }
\crefname{mainthm}{Theorem}{Theorems}
\Crefname{mainthm}{Theorem}{Theorems}
\crefname{maincor}{Corollary}{Corollaries}
\Crefname{maincor}{Corollary}{Corollaries}
\def\namedlabel#1#2{\begingroup
   \def\@currentlabel{#2}\label{#1}\endgroup
}
 \newcommand{\eps}{\varepsilon}
\newcommand{\RR}{\mathbb R}
\newcommand{\ZZ}{\mathbb Z}
\newcommand{\CC}{\mathbb C}
\newcommand{\NN}{\mathbb N}
\renewcommand{\Re}{\text{Re}}
\DeclareMathOperator{\Area}{Area}
\DeclareMathOperator{\Flux}{Flux}
\def\ul{\underline}
\title{Some cute applications of Lagrangian cobordisms towards examples in quantitative symplectic geometry}
\author{Jeff Hicks and Cheuk Yu Mak }
\date{\today}
\begin{document}
\maketitle

\begin{abstract}
    We provide some constructions using Lagrangian cobordisms that improve known examples for some symplectic squeezing problems. Additionally, we prove a flexibility result that Lagrangian submanifolds that are Lagrangian isotopic are also Lagrangian cobordant.
 \end{abstract}
\section{Introduction}
     A general type of problem in symplectic geometry is the ``squeezing problem''. A typical example of a squeezing problem asks: given subsets $A, B$ inside of a symplectic manifold $(X, \omega)$, does there exist a symplectic or Hamiltonian isotopy $\phi: X\to X$ so that $\phi(A)\subset B$.
 In the simplest cases one can use quantitative information from the symplectic form to solve a squeezing problem. For example, a first quantity that one can associate with a squeezing problem comes from the volume, which must satisfy the inequality $\int_A\omega^n\leq \int_B \omega^n$. It is natural to ask whether we can find a solution to a squeezing problem given some quantitative information about the objects being squeezed. These questions have two components:  constructing examples of squeezings and finding obstructions to the existence of squeezings.
 
 In this paper, we give some improvements on known constructions of squeezings by using Lagrangian cobordisms to produce examples. In each problem, the quantitative nature of the construction exhibits itself by considering the shadow of the Lagrangian cobordisms used.

\subsubsection*{Problem I: Packing Lagrangian Tori} 

\begin{wrapfigure}{r}{.4\textwidth}
    \begin{center}
        \begin{tikzpicture}

\fill[gray!20]  (0,0) rectangle (4.5,2.5) node (v1) {};
\node[red] at (1,1) {$\times$};
\node[red] at (1,2) {$\times$};
\node[red] at (2,1) {$\times$};
\node[red] at (2,2) {$\times$};
\node[red] at (3,1) {$\times$};
\node[red] at (3,2) {$\times$};
\node[red] at (4,1) {$\times$};
\node[red] at (4,2) {$\times$};
\draw (0,2.5) -- (0,0) -- (4.5,0);
\draw[dashed] (0,2.5) -- (4.5, 2.5) -- (4.5,0);
\draw[|-|] (-0.5,2.5) -- node[fill=white]{$b$} (-0.5,0);
\draw[|-|] (0,-0.5) -- node[fill=white]{$a$} (4.5,-0.5);
\end{tikzpicture}         \caption{Can you pack an integral Lagrangian torus into the polydisk so that it avoids all other integral Lagrangian tori?}
    \end{center}
\end{wrapfigure}
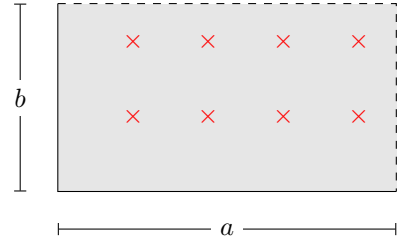

The integral Lagrangian packing problem asks how many pairwise disjoint integral Lagrangians\footnote{An integral Lagrangian is a Lagrangian submanifold such that the area homomorphism $H_2(X,L;\mathbb{Z}) \to \mathbb{R}$ takes only integer values.} 
one can find in a symplectic manifold $(X,\omega)$.

This problem has been studied by \cite{hind2021packing} for $4$-dimensional polydisks. For $a, b>0$, let the Lagrangian product torus be 
\[L(a, b):=\{(z_1, z_2)\in \CC^2 \;|\; \pi |z_1|^2=a, \pi |z_2|^2=b\}.\]
Define the polydisk to be 
\[P(a, b):=\{(z_1, z_2)\in \CC^2 \;|\; \pi |z_1|^2<a, \pi |z_2|^2<b\}.\]
Fix $a, b>0$. For any $m, n\in \mathbb{N}$ with $m<a$ and $n<b$, $L(m,n)$ is an integral Lagrangian in $P(a,b)$. 
If every integral Lagrangian $L \subset P(a,b)$ intersects some $L(m,n)$, we say that the integral packing of $P(a, b)$ by $\{L(m,n)\}$ is maximal.
    The Lagrangian torus packing problem has the following obstruction and construction:
    \begin{description}
    \item[Obstruction {\cite[Theorem 1.1]{hind2021packing}}:] If $1<a, b<2$, the integral packing of $P(a, b)$ by $L(1,1)$ is maximal. 
    
    \item[Construction {\cite[Theorem 1.2]{hind2021packing}}:] If $\min(a, b)>2$, then the integral packing of $P(a, b)$ by $\{L(m,n)\}$ is not maximal.
\end{description}
In \cref{sec:packing} we provide the following construction.
\begin{construction}\label{t:1}
    If $2<a$ and $1<b$, then the integral packing of $P(a, b)$ by $\{L(m,n)\}$ is not maximal.
\end{construction}

As a consequence, we answer \cite[Question 1.5]{hind2021packing} negatively and \cite[Question 1.6 and 1.7]{hind2021packing} affirmatively (see Corollary \ref{c:packings}).

\subsubsection*{Problem II: Displacing Lagrangian Tori} 
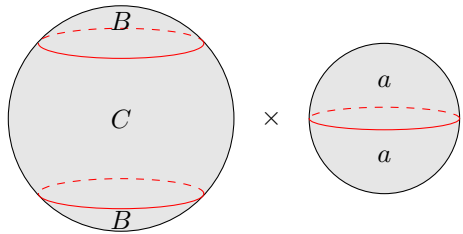
\begin{wrapfigure}{r}{.4\textwidth}
    \begin{center}
        \begin{tikzpicture}

\draw[fill=gray!20]  (1,0.5) ellipse (1.5 and 1.5);
\draw[fill=gray!20]  (4.5,0.5) ellipse (1 and 1);

\begin{scope}[]

\begin{scope}[]
\clip  (2.5,2) rectangle (-0.5,1.5);
\draw[red, dashed]  (1,1.5) ellipse (1.1 and 0.2);
\end{scope}

\begin{scope}[]
\clip  (2.5,1.5) rectangle (-0.5,1);
\draw[red]  (1,1.5) ellipse (1.1 and 0.2);
\end{scope}
\end{scope}

\begin{scope}[shift={(3.5,-1)}]

\begin{scope}[]
\clip  (2.5,2) rectangle (-0.5,1.5);
\draw[red, dashed]  (1,1.5) ellipse (1 and 0.15);
\end{scope}

\begin{scope}[]
\clip  (2.5,1.5) rectangle (-0.5,1);
\draw[red]  (1,1.5) ellipse (1 and 0.15);
\end{scope}
\end{scope}

\begin{scope}[shift={(0,-2)}]

\begin{scope}[]
\clip  (2.5,2) rectangle (-0.5,1.5);
\draw[red, dashed]  (1,1.5) ellipse (1.1 and 0.2);
\end{scope}

\begin{scope}[]
\clip  (2.5,1.5) rectangle (-0.5,1);
\draw[red]  (1,1.5) ellipse (1.1 and 0.2);
\end{scope}
\end{scope}
\node at (3,0.5) {$\times$};
\node at (1,-0.85) {$B$};
\node at (1,0.5) {$C$};
\node at (1,1.8) {$B$};
\node at (4.5,1) {$a$};
\node at (4.5,0) {$a$};
\end{tikzpicture}         \caption{Can you displace the red Lagrangian, a disjoint union of two tori, from itself?}
    \end{center}
\end{wrapfigure}
The second problem we consider is displacing a Lagrangian link from itself in $S^2\times S^2$, studied by the second author and Smith in \cite{mak2021non}.

Let $\omega$ be a symplectic form on $S^2$ with area $1$.
For any $A \in \mathbb{R}_{>0}$, we denote $(S^2, A \omega)$ by $S^2_A$.
Let $L_1$ and $L_2$ be disjoint embedded circles in $S^2_{2B+C}$ such that the complement of 
$\underline{L}_{B, C}:=L_1 \cup L_2$ consists of two disks of area $B$ and one annulus of area $C$.
Let $K_a$ be the equator in $S^2_{2a}$.

It is clear by area considerations that every connected component of the Lagrangian $\underline{L}_{B, C}$ in $S^2_{2B+C}$ is Hamiltonian non-displaceable from $\underline{L}_{B, C}$ if and only if $B-C \ge 0$.
The following result concerns $\underline{L}_{B, C} \times K_a$.
\begin{description}

\item[Obstruction {\cite[Theorem 1.1]{mak2021non}}:] If $B-C \ge a >0$, any connected component of the Lagrangian $\underline{L}_{B, C} \times K_a$ in $S^2_{2B+C} \times S^2_{2a}$ is Hamiltonian non-displaceable from $\underline{L}_{B, C} \times K_a$.

\item[Construction:] In contrast, $\underline{L}_{B, C} \times K_a$ is Hamiltonian displaceable from itself when $a>B$.
\end{description}

In fact, the obstruction works for collections of more parallel circles rather than $\underline{L}_{B, C}$ (see \cite{polterovich2021lagrangian}). On the construction side, Polterovich showed that $\underline{L}_{B, C} \times K_a$ is Hamiltonian displaceable when $a$ is sufficiently large \cite[Example 6.3.C]{PolBook}, \cite[Lemma 1.11]{mak2021non}. Dimitroglou Rizell explained to us the construction above which only requires $a>B$, and is a direct application of probes \cite[Lemma 2.4]{McDprobe} (see Figure \ref{fig:probe}).

While the obstruction has a dependence on $C$, the construction does not. We provide a construction that depends on $C$.
\begin{construction}\label{t:2}
    $\underline{L}_{B, C} \times K_a$ is displaceable from one of its connected components when $a>2(B-C)$.
\end{construction}

The particularly interesting case is when $B=C$. In this case, $\underline{L}_{B, C}$ is a monotone link in the sense of \cite{CGHMSS21}. Recall that a union of pairwise disjoint circles $\ul{L}=L_1 \cup \dots \cup L_k$ in a closed symplectic surface $\Sigma$ is called a {\it monotone link} if the connected components of the complement of $\underline{L}$ have the same area. Every monotone link is non-displaceable by area consideration, and it has a non-zero Heegaard Floer type invariant which plays an important role in \cite{CGHMSS21}.

As a generalization of Construction \ref{t:2}, we show that
\begin{construction}\label{t:2'}
    Let $\underline{L}$ be a monotone link in a closed symplectic surface $\Sigma$.
    If it has more than one component and there is a component $L_1 \subset \underline{L}$ which bounds a disk that is disjoint from the other components, then $L_1 \times K_a$ is displaceable from $\underline{L} \times K_a$ in $\Sigma \times S^2_{2a}$ for any $a>0$.
\end{construction}

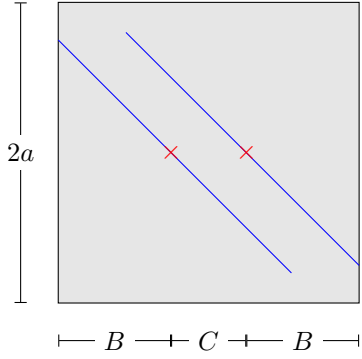
\begin{figure}[ht]
    \centering
    \begin{tikzpicture}

\draw[|-|] (-1.5,2.5) -- node[fill=white]{$2a$} (-1.5,-1.5);

\draw[|-|] (3,-2) -- node[fill=white]{$B$} (1.5,-2);

\draw[|-|] (1.5,-2) -- node[fill=white]{$C$} (0.5,-2);

\draw[|-|] (0.5,-2) -- node[fill=white]{$B$} (-1,-2);

\draw[fill=gray!20]  (-1,2.5) rectangle (3,-1.5);

\draw[blue] (-1,2) -- (2.1,-1.1);
\draw[blue] (3,-1) -- (-0.1,2.1);

\node[red] at (0.5,0.5) {$\times$};
\node[red] at (1.5,0.5) {$\times$};
\end{tikzpicture}     \caption{Toric picture of $S^2_{2B+C} \times S^2_{2a}$. The blue line segments are probes that displace $\underline{L}_{B, C} \times K_a$ (red crosses).}
    \label{fig:probe}
\end{figure}

\subsubsection*{Problem III: Constructing Lagrangian Submanifolds} 

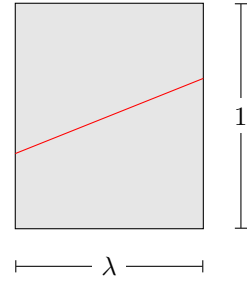
\begin{wrapfigure}{r}{.4\textwidth}
    \begin{center}
        \begin{tikzpicture}[rotate=90]

\draw[fill=gray!20]  (0.5,-1) rectangle (3.5,-3.5);
\draw[red] (2.5,-3.5) -- (1.5,-1);
\draw[|-|] (0,-1) -- node[fill=white]{$\lambda$} (0,-3.5);
\draw[|-|] (0.5,-4) -- node[fill=white]{1} (3.5,-4);
\end{tikzpicture}         \caption{Can you squeeze the Lagrangian Klein bottle in $S^2_\lambda \times S^2_1$ as you increase $\lambda$ beyond 2?}
        \label{fig:lagrangianKleinBottle}
    \end{center}
\end{wrapfigure}

The third problem we examine constructs non-orientable Lagrangian submanifolds in $S^2\times S^2$.

Let $X$ be a 4-manifold with symplectic form $\omega$, and $\beta\in H_2(X, \ZZ/2\ZZ)$ be a homology class. The minimal genus problem asks ``what is the minimal (non-orientable) genus of a (possibly non-orientable) Lagrangian submanifold representing the class $\beta$?'' We denote this quantity $\eta(X, \omega, \beta)$.

In \cite{evans2021lagrangian}, Evans explores the behavior of $\eta(X, \omega, \beta)$ as $\omega$ is varied with fixed $X, \beta$. In the specific setting where $X=S^2_\lambda\times S^2_1$ with symplectic form  $\omega_\lambda=\lambda\omega_{S^2}+\omega_{S^2}$ and $\beta=[S^2\times \{\bullet\}]$, he conjectures that 
\begin{equation}
\lim_{\lambda\to\infty} \eta(X, \omega_\lambda, \beta)=\infty.
\label{conj:evans}
\end{equation}
As evidence toward the conjecture, Evans provides the following obstruction and construction. 
\begin{description}
    \item[Obstruction {\cite[Theorem 3.1]{evans2021lagrangian}}:] Consider the symplectic space $S^2_\lambda\times B^*_1S^1$, the product of a sphere with area $\lambda$ and a cylinder of area $1$. For any $\epsilon>0$, there does not exist a family of embedded Lagrangian Klein bottles $i_{\lambda}: L\to S^2_\lambda\times B^*_1S^1$ where $\lambda \in [2-\eps, 2+\eps]$, and $i_{2-\epsilon}(L)$ is the visible Lagrangian Klein bottle drawn in Figure \ref{fig:lagrangianKleinBottle}.
    \item[Construction {\cite[Lemma 2.9]{evans2021lagrangian}}:] The minimal genus is bounded by 
    \[\eta(X, \omega_\lambda, \beta) \le 20\ell+2 \text{ when $\lambda<10\ell+2$}.\]
\end{description}
It was recently shown in \cite{adaloglou2024lagrangiankleinbottless2} that there is no Lagrangian Klein bottle in the class of $\beta$ if $\lambda>2$.

The construction of \cite[Lemma 2.9]{evans2021lagrangian} uses Lagrangian lifts of tropical curves; the jumps in the bound correspond to when the rectangle with side lengths $(1, \lambda)$ can squeeze in another tropical curve. 
Using Lagrangian cobordisms, we provide a (slightly) improved bound whose jumps occur at even integer values of $\lambda$. 
\begin{construction}\label{t:3}
    $\eta(X, \omega_\lambda, \beta) \le 4n+2$, where $\lambda<2n$.
\end{construction}
By a similar construction, we obtain the following lemma, which might be of independent interest.
\begin{lemma}[=Corollary \ref{c:LagIsotopic}]
    Suppose that $L_0, L_1$ are orientable and Lagrangian isotopic. Then $L_0$ and $L_1$ are Lagrangian cobordant.
\end{lemma}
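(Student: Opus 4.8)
The plan is to reduce the statement to a short list of elementary moves, realise each by a concrete Lagrangian cobordism, and concatenate --- Lagrangian cobordism is transitive, one glues cobordisms along their cylindrical ends in the $\CC$-factor. Write the isotopy as $\{L_t\}_{t\in[0,1]}$. By the Weinstein neighbourhood theorem and compactness of $[0,1]$ I would subdivide $0=t_0<\dots<t_N=1$ so that over each $[t_i,t_{i+1}]$ the isotopy lives inside a fixed Weinstein chart $T^*L_{t_i}$ around $L_{t_i}$, in which $L_{t_{i+1}}$ appears as the graph $\Gamma_{\sigma_i}$ of a $C^1$-small closed $1$-form $\sigma_i$ on $L_{t_i}$. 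By transitivity it then suffices to produce, for a closed $C^1$-small $1$-form $\sigma$ on a closed orientable $L$, a Lagrangian cobordism in $\CC\times T^*L$ from the zero section to $\Gamma_\sigma$. Hodge-decomposing $\sigma=df+\tau$ with $\tau$ harmonic, the exact part is handled by the classical Lagrangian suspension of the Hamiltonian isotopy generated by a compactly supported cut-off of $(x,\xi)\mapsto f(x)$ (and $\Gamma_\sigma$, $\Gamma_\tau$ are Hamiltonian isotopic). So everything reduces to the ``pure flux'' move: connecting the zero section to $\Gamma_\tau$ for $\tau$ closed, which is the one genuinely non-Hamiltonian step and the place where the class $[\tau]\in H^1(L;\RR)$ must be absorbed.

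This flux move is the crux, and it is where orientability enters. The naive candidates all fail: neither the trace $\{(t,\Gamma_{t\tau})\}$ of the isotopy nor any cobordism that is graphical over $[0,1]\times L$ can be made Lagrangian, since that would force $\tau$ to be exact. Hence the cobordism must acquire topology ``in the flux directions'': it is embedded and cylindrical near its two ends but is \emph{not} a section in between, and its projection to $\CC$ must enclose area equal to the relevant periods of $\tau$ --- precisely the degree of freedom a graphical construction lacks. To build it I would either look for an explicit local model inside a Weinstein chart (a toy suspension over an arc in $\CC$ that winds, so that the enclosed area compensates the non-exactness), or produce it by Lagrangian surgery of the zero section against auxiliary Lagrangian pieces arranged so that undoing the surgeries returns $L$ shifted by the prescribed flux, gluing the resulting Biran--Cornea surgery cobordisms along their $\CC$-ends. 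Orientability of $L$ is used exactly here: it is what makes the relevant surgery/handle-framing coherent and guarantees the glued object is an honest embedded Lagrangian.

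The main obstacle, then, is purely the flux step: constructing a genuinely embedded Lagrangian submanifold of $\CC\times T^*L$ with the two prescribed cylindrical ends, realising a nonzero period shift, and checking that the area bookkeeping in the $\CC$-factor is consistent. The reductions in the first paragraph and the suspension construction for the exact part are routine by comparison. (In the quantitative spirit of the paper one would moreover want this flux cobordism built with controlled shadow, which is the more delicate estimate, but it is not needed for the qualitative statement here.)
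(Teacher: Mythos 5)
Your outer reduction is exactly the paper's: subdivide the isotopy so that each step takes place in a Weinstein chart $T^*L_{t_i}$ where the next Lagrangian is the graph of a small closed $1$-form, split off the exact part by a Lagrangian suspension, and concatenate. But the statement you then isolate as ``the crux'' --- producing an embedded Lagrangian cobordism in $T^*L\times\CC$ from the zero section to the graph $\Gamma_\tau$ of a non-exact closed form --- is precisely the content of the paper's Lemma \ref{lem:isotopicGivesCobordism}, and you do not prove it; you only name two candidate strategies and explicitly leave the construction as the main obstacle. That is a genuine gap, because this step is the entire substance of the result: everything you do carry out (Weinstein subdivision, Hodge splitting, suspension of the Hamiltonian part) is the routine part, as you yourself note.

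Concretely, your first suggestion (a ``toy suspension over an arc in $\CC$ that winds'') is not a construction: a suspension realizes only exact/Hamiltonian deformations, and an enclosed-area heuristic in the $\CC$-factor does not by itself produce a Lagrangian with the two prescribed graphical ends. Your second suggestion (surgery of the zero section against auxiliary pieces, then undoing it) is the right idea in spirit, but the content lies in the choices you leave unspecified: the paper takes an embedded hypersurface $H\subset L$ Poincar\'e dual to the flux class, uses the auxiliary Lagrangians $H\times\gamma_1$, $H\times\gamma_2$ in $T^*H\times T^*[-1,1]$, performs a \emph{clean} surgery along $H\times I_i$ (citing \cite[Corollary 2.22]{mak2018dehn}), runs an exact homotopy that transfers the prescribed area $\alpha$ across the $H$-slab, undoes the surgery by an antisurgery trace, checks the result is the graph of a form representing $\alpha\,PD([H])$, and finally perturbs the assembled (immersed) concatenation to an embedded cobordism. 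None of these steps --- in particular why ``undoing the surgeries returns $L$ shifted by the prescribed flux,'' and why the assembled object can be made embedded --- is justified in your proposal. Also, orientability does not enter through any ``framing coherence'' of the surgery, as you suggest, but earlier: it is what lets one write $[\eta]=\sum_j\alpha_j\,PD([H_j])$ with $H_j$ embedded two-sided hypersurfaces, so that the per-hypersurface construction (with the transferred area set to $\alpha_j$) can be applied and concatenated. As it stands, your argument reduces the lemma to itself.
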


We will only apply this lemma to the case that $L_0$ and $L_1$ are unions of circles. The non-orientable Lagrangians we construct will be a concatenation of Lagrangian cobordisms.

\subsection*{Acknowledgements}
We thank Richard Hind for his interest in this work, and Felix Schlenk and Egor Shelukhin for some helpful comments. We would also like to thank the organizers of the Uppsala University-Nantes Workshop on Lagrangian cobordisms and Floer theory for providing a stimulating environment where we first discussed these constructions. Finally, we thank an anonymous referee for their helpful and detailed comments.
JH was supported by ERC Grant 850713 and EPSRC Grant EP/Z53528X/1. CM was partially supported by the Simons Collaboration on Homological Mirror Symmetry and a Royal Society University Research Fellowship. 
 \subsection{Background: Lagrangian Cobordisms}
    
Quantitative symplectic geometry looks at how the answers to the above problems change as we vary the quantities in the problem. 
In general, these problems are studied by producing constructions (providing an affirmative answer for some subset of the quantitative data) or developing obstructions (which can answer the question in the negative for some subsets of the quantitative data). 
In this paper, we will use Lagrangian cobordisms to provide constructions for several examples of problems coming from quantitative symplectic geometry. 
A feature of these constructions is that the shadow metric of the Lagrangian cobordism provides a connection to the quantitative nature of the problem.
\begin{definition}[\cite{arnol1980lagrange,biran2013lagrangian}]
    Let $L_0, \ldots, L_k$ be Lagrangian submanifolds of $X$. 
    A \emph{Lagrangian cobordism} from $\{L_j\}_{j=1}^k$ to $L_0$ is a Lagrangian submanifold $K\subset X\times \CC$ which satisfies the following conditions:
    \begin{itemize}
        \item \emph{Fibered over ends:} There exist constants $t^-<t^+$ such that 
        \[
            K\cap \{(x, z)\;:\; \Re(z)\geq t^+\}= \bigcup_{j=1}^k L_j \times \{ j\cdot \sqrt{-1} + \RR_{\ge t^+}\}
        \]
        \[
            K\cap \{(x, z)\;:\; \Re(z)\leq t^-\}= L_0 \times \RR_{\leq t^-}
        \]
        \item \emph{Compactness:} There exists a constant $s>0$ so that the projection $\pi_{\sqrt{-1}\RR}: K\to \sqrt{-1} \RR \subset \CC$ is contained within an interval $[-\sqrt{-1} s, \sqrt{-1} s]$. 
    \end{itemize}
\end{definition}
We will write such a cobordism as $K:(L_1, \ldots, L_k)\rightsquigarrow L_0$.
The regular level sets of the real coordinate $t=\Re(z)$ in $K$ project to immersed Lagrangian submanifolds of $X$.
\begin{definition}
    \label{claim:slice}
Given a Lagrangian cobordism $K\subset X\times \CC$, and $t\in \RR$ a value so that $\pi_\RR^{-1}(t)\subset X\times \CC$ is transverse to $K$, the \emph{slice} of $K$ at $t\in\RR$ is an immersed Lagrangian submanifold 
\[K|_t:=\pi_X(\pi_\RR^{-1}(t)\cap K ).\]
\end{definition}

Given a Lagrangian cobordism $K\subset X\times \CC$, the \emph{shadow} $\Area(K)$ is the infimum of the area of simply connected regions $U\subset \CC$ that contain $\pi_\CC(K)$ (\cite{cornea2019lagrangian}).

\begin{example}\label{e:suspensionarea}
Let $H=(H_t)_{t \in \RR}: X\times \RR\to \RR$ be a time-dependent Hamiltonian with compact support, and $i:L\to X$ be a Lagrangian submanifold. Let $\phi_t: X\to X$ be the time $t$ Hamiltonian flow, and write $i_t:= \phi_t\circ i$. The \emph{suspension} of $H$ is the Lagrangian cobordism $K_{H}$ which is parameterized by 
\begin{align}
    K_{H}:= L\times \RR\to& X \times \CC \label{eq:suspension}\\
    (q, t) \mapsto& (\phi_t(i(q)),t+\sqrt{-1} i_t^*H_t(q))
\end{align}
It satisfies the property that $\Area(K_{H})=\|H_t\|_L$, where $\|H_t\|_L$ is the (relative) \emph{Hofer norm}
\[\|H_t\|_L:= \int_{\RR} \left(\sup\{i_t^*H_t(q)\;|\; q\in L\} - \inf \{ i_t^*H_t(q)\;|\; q\in L\}\right)\, dt\]
as studied by \cite{hofer1990topological,mcduff2001geometric,PolBook,chekanov1996hofer}.
\end{example}
We say that a Lagrangian homotopy $i_t: L\times \RR\to X$ is an \emph{exact homotopy} if the flux class is exact, with primitive $H_t: L\times \RR\to \RR$. In this setting, \cref{eq:suspension} also yields a Lagrangian cobordism between $i(t\ll 0)$ and $i(t\gg 0)$.
\begin{example}
    Let $L_1, L_2$ be Lagrangian submanifolds intersecting transversely at a single point $q$. The \emph{surgery trace cobordism} is a Lagrangian cobordism
    \[K: (L_1, L_2) \rightsquigarrow L_1\#_q L_2,\] where $L_1\#_q L_2$ is the Polterovich surgery (\cite{polterovich1991surgery,lalonde1991sous}) of $L_1$ and $L_2$ at $q$ (see \cref{fig:lagrangiansurgerytrace}). The shadow of $K$ is equal to the neck width of the Polterovich surgery.
    Note that the order of the summands in the Polterovich surgery is determined by the ordering of the ends of the surgery trace cobordism.
    \label{exam:surgery}
\end{example}
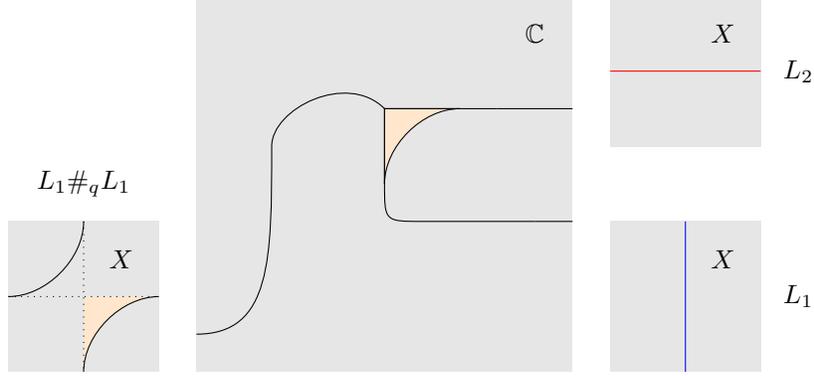
\begin{figure}
    \centering
    \begin{tikzpicture}

    \fill[gray!20]  (-1,-0.5) rectangle (4,-5.5);
    \begin{scope}[shift={(-1,-3.5)}]
    \fill[gray!20]  (-2.5,0) rectangle (-0.5,-2);
    \fill[orange!20] (-1.5,-2) .. controls (-1.5,-1.5) and (-1.5,-1) .. (-1.5,-1) .. controls (-1.5,-1) and (-1,-1) .. (-0.5,-1) .. controls (-1,-1) and (-1.5,-1.5) .. (-1.5,-2);
    
    \draw (-1.5,-2) .. controls (-1.5,-1.5) and (-1,-1) .. (-0.5,-1);
    \draw (-1.5,0) .. controls (-1.5,-0.5) and (-2,-1) .. (-2.5,-1);
    
    \draw[dotted] (-2.5,-1) -- (-0.5,-1) (-1.5,0) -- (-1.5,-2);
    \end{scope}
    
    \fill[gray!20]  (4.5,-3.5) rectangle (6.5,-5.5);
    \fill[gray!20]  (4.5,-0.5) rectangle (6.5,-2.5);
    \draw (1.5,-3) -- (1.5,-2);
    \draw (3,-2) -- (1.5,-2);
    \draw[fill=orange!20] (1.5,-3) .. controls (1.5,-2.5) and (2,-2) .. (2.5,-2) .. controls (2,-2) and (1.5,-2) .. (1.5,-2) .. controls (1.5,-2) and (1.5,-2) .. (1.5,-3);
    \draw (1.5,-2) -- (1.5,-2);
    \draw[blue] (5.5,-3.5) -- (5.5,-5.5);
    \draw[red](4.5,-1.5) -- (6.5,-1.5);

    \draw (1.5,-3) .. controls (1.5,-3.5) and (1.5,-3.5) .. (2,-3.5) .. controls (2.5,-3.5) and (2.5,-3.5) .. (3.5,-3.5);
    \draw (3,-2) -- (4,-2);
    \draw (3.5,-3.5) -- (4,-3.5);
    \draw (1.5,-2) .. controls (1,-1.5) and (0,-2) .. (0,-2.5) .. controls (0,-4) and (0,-5) .. (-1,-5);
    \node at (7,-4.5) {$L_1$};
    \node at (7,-1.5) {$L_2$};
    \node at (-2.5,-3) {$L_1\#_q L_2$};
    \draw (-3.5,-4);
    \node at (3.5,-1) {$\mathbb C$};
    \node at (6,-1) {$X$};
    \node at (6,-4) {$X$};
    \node at (-2,-4) {$X$};
    
    \end{tikzpicture}     \caption{The projection of the Lagrangian surgery trace to the $\CC$ coordinate, as well as the ends of the Lagrangian surgery trace. The orange areas (measuring the surgery neck in the $X$ coordinate, and the shadow in the $\CC$ coordinate) agree.}
    \label{fig:lagrangiansurgerytrace}
\end{figure}

The shadow of a Lagrangian cobordism provides coarse bounds for the ``size'' of the set we can squeeze a Lagrangian cobordism into. 

For the three problems above, we will use Lagrangian cobordisms to provide constructions for squeezing problems, and remark where the shadow provides an obstruction to the smallest size sets we can squeeze our constructions into.

\begin{prop}\label{p:modifysuspension}
    Let $U\subset \CC$ be a connected open set with $\Area(U)>\Area(K)$. Then there exists $K'$ with $\Area(\pi_\CC(K')\setminus U)=0$ and $K'$ is compactly Hamiltonian isotopic to $K$.  
\end{prop}
\begin{proof}
    Let $V\subset \CC$ be an open set with the property that $\Area(\pi_\CC(K)\setminus V)=0$ and $\Area(V)<\Area(U)$. There exists a compactly supported Hamiltonian isotopy $\phi_t:\CC\to \CC$ so that $\phi_1(V)\subset U$. Extend this componentwise to a Hamiltonian isotopy $\tilde \phi_t:X\times \CC\to X\times \CC$; we define $K':=\tilde \phi_1(K)$.
\end{proof}
\begin{prop}
    Let $K_1: L\rightsquigarrow L_1$ and $K_2: L\rightsquigarrow L_2$ be two suspension cobordisms of time-dependent Hamiltonian isotopies with support in $(0, t_0)$. Let $c>0$, $\gamma_-\subset \CC$ be the U-shaped curve with $\gamma_-(0)=0, \gamma_-(1)=\sqrt{-1} \cdot c$ and $\gamma_+\subset \CC$ be the U-shaped curve with $\gamma_+(0)=t_0, \gamma_+ (1)=t_0+\sqrt{-1} \cdot c$ (as drawn in \cref{fig:circleLemma}).
    Suppose that $c,\gamma_-,\gamma_+$ are chosen such that the following two sets are submanifolds of $X \times \CC$ 
    \begin{align*}
        (L_1\times \gamma_-) \cup K_1|_{0<\Re(z)<t_0} \cup (K_1|_{0<\Re(z)<t_0}+\sqrt{-1} \cdot c)\cup (L\times \gamma_+)\\
        (L_2\times \gamma_-) \cup K_2|_{0<\Re(z)<t_0} \cup (K_2|_{0<\Re(z)<t_0}+\sqrt{-1} \cdot c)\cup (L\times \gamma_+)
    \end{align*}
    Then they are related by an exact isotopy.
    \label{lem:circlelemma}
\end{prop}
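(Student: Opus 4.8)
The plan is to interpolate the two closed-up Lagrangians through a family of the same shape, by linearly interpolating the Hamiltonians that generate the two suspensions, and to observe that this interpolation has vanishing flux because the ``top half'' of each closed-up Lagrangian is literally its ``bottom half'' translated by $\sqrt{-1}c$. Write $\hat K_i$ ($i=1,2$) for the two displayed submanifolds, each diffeomorphic to $L\times S^1$; here $K_i$ is the suspension of a Hamiltonian isotopy of $X$ generated by a time-dependent Hamiltonian $G^i_t$ supported in $(0,t_0)$, normalised so that the $\Re(z)=t_0$-slice of $K_i$ is $L$ and the $\Re(z)=0$-slice is $L_i$ (this normalisation is forced by the displayed formula). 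Since the space of such Hamiltonians is convex, set $G^s_t:=(1-s)G^1_t+sG^2_t$, let $K_s$ be the resulting suspension cobordism, and put
\[
  \hat K_s:=(L^s\times\gamma_-)\cup K_s|_{0<\Re(z)<t_0}\cup\bigl(K_s|_{0<\Re(z)<t_0}+\sqrt{-1}c\bigr)\cup(L\times\gamma_+),
\]
where $L^s$ is the $\Re(z)=0$-slice of $K_s$, so that $L^0=L_1$ and $L^1=L_2$.

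I would first check that the \emph{same} $c,\gamma_-,\gamma_+$ close $K_s$ up into an embedded Lagrangian submanifold for every $s\in[0,1]$. Near each of the four corners all $t$-derivatives of $G^s_t$, and of the isotopy it generates, vanish (as $G^s_t$ is supported in $(0,t_0)$), so the smoothness requirement at a corner is a condition on $\gamma_\pm$ alone — the very one already satisfied by $\hat K_1$ — and in particular is independent of $s$. Away from the corners, the $\CC$-projection of $K_s|_{0<\Re(z)<t_0}$ lies, slicewise, in the convex hull of those of $K_1$ and $K_2$ (convexity of $\max_L$ and $\min_L$), so the chosen $c$ still separates the two translated copies and the chosen $\gamma_\pm$ still avoid them. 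Hence $s\mapsto\hat K_s$ is a smooth isotopy of embedded Lagrangians from $\hat K_1$ to $\hat K_2$.

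It then remains to see this Lagrangian isotopy is \emph{exact}, i.e.\ that every flux period is constant in $s$. Writing $H_1(\hat K_s;\RR)\cong H_1(L;\RR)\oplus\RR\langle[S^1]\rangle$: a class in the $H_1(L)$-summand is represented by a loop lying in the cap $L\times\gamma_+$, which is the same subset of $X\times\CC$ for all $s$, so its flux period is $0$; and the $S^1$-class is represented by a loop $\ell_s$ whose $\CC$-projection bounds a region of area $c\,t_0$ plus a constant depending only on $c,\gamma_\pm$ — this is exactly where one uses that the top half of $\hat K_s$ is the bottom half translated by $\sqrt{-1}c$ — and whose $X$-projection is null-homotopic (it runs along the isotopy and back) and so can be capped with zero symplectic area; thus $\ell_s$ bounds a surface of $s$-independent symplectic area and its flux period is again $0$. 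Therefore the isotopy is exact, and $\hat K_1$ is exactly isotopic to $\hat K_2$.

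The step I expect to be most delicate is the second one: making the corner analysis precise, so that one genuinely single choice of $c,\gamma_\pm$ closes up the whole family $\{K_s\}$ smoothly. This is the sort of truncation-profile bookkeeping carried out in \cite{hicks2021lagrangian} and used in \cref{p:modifysuspension}, and can be imported wholesale. A cleaner, more high-powered alternative to the whole argument is to recognise $\hat K_i$ as the Lagrangian suspension of the null-homotopic loop $\phi^i\ast c_L\ast\overline{\phi^i}\ast c_{L_i}$ of Lagrangians in $X$ — equivalently, $K_i|_{0<\Re(z)<t_0}$ concatenated with its vertical reflection and capped off by $\gamma_\pm$ — and to invoke that turning a Lagrangian cobordism back on itself yields a trivial cobordism; this presents each $\hat K_i$ as exactly isotopic to the fixed product $L\times\sigma$, with $\sigma\subset\CC$ bounding a disk of area $c\,t_0$ plus the same constant, so that $\hat K_1\simeq L\times\sigma\simeq\hat K_2$.
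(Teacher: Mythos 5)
Your argument is correct, but it runs along a different track than the one the paper intends. The paper's proof is a one-line reduction to the construction of $\overline i^L_s$ in \cite[Page 53]{hicks2021lagrangian}: there one sweeps the Hamiltonian isotopy back (truncating the flow at time $s$), which exhibits \emph{each} closed-up Lagrangian $\hat K_i$ as exactly isotopic to the same product $L\times\sigma$, exactness coming from the cancellation between the bottom copy and its $\sqrt{-1}c$-translate; your closing ``high-powered alternative'' is essentially this argument. Your main proof instead interpolates linearly between the generating Hamiltonians and checks directly that the resulting family $\hat K_s$ has vanishing flux periods: zero on the $H_1(L)$-summand because those classes are carried by the fixed cap $L\times\gamma_+$ (and tangential components never contribute to the flux form), and zero on the $S^1$-class because the two suspension halves of the monodromy loop have identical $X$-projections traversed in opposite directions (so the $\omega_X$-contribution cancels pointwise) while the enclosed $\CC$-area is $c\,t_0$ plus an $s$-independent cap term. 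This is a legitimate, self-contained alternative that avoids the citation, and it buys a direct isotopy between $\hat K_1$ and $\hat K_2$ rather than passing through $L\times\sigma$; the paper's route buys the stronger normal form $\hat K_i\simeq L\times\sigma$ with no need to discuss the interpolated family at all. One caveat: your claim that the $\CC$-projection of $K_s|_{0<\Re(z)<t_0}$ lies slicewise in the convex hull of those of $K_1,K_2$ is not justified as stated, since the time-$t$ slice of the interpolated flow is not pointwise a convex combination and may visit points where, say, $G^1_t$ is large even though the slice of $K_1$ does not. Fortunately you do not need it: $K_s$ and $K_s+\sqrt{-1}c$ are disjoint for \emph{any} $c>0$ (equality would force equal $X$- and real coordinates, hence equal Hamiltonian values differing by $c$), corner smoothness is $s$-independent because all $G^s_t$ vanish near $t=0,t_0$, and for $u$-shaped caps contained in $\{\Re(z)\le 0\}$ and $\{\Re(z)\ge t_0\}$, as in the intended picture, no further intersections can occur; if one insists on caps that enter the strip, the interpolation argument would need an extra hypothesis or a preliminary exact isotopy pushing the caps out of the strip.
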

The argument is the same as the construction of $\overline i^L_s$ from \cite[Page 591]{hicks2021lagrangian}.
\begin{figure}
    \centering
    \begin{tikzpicture}

\fill[gray!20]  (-2,2) rectangle (4.5,-1.5);
\draw (-0.5,-0.5) .. node[fill=gray!20]{$\gamma^-$} controls (-1.5,-0.5) and (-1.5,1) .. (-0.5,1);

\begin{scope}[]

\draw[fill=orange!20] (-0.5,1) .. controls (0,1) and (0.5,1.5) .. (1,1.5) .. controls (1.5,1.5) and (2.5,1) .. (3,1) .. controls (2.5,1) and (1.5,0.5) .. (1,0.5) .. controls (0.5,0.5) and (0,1) .. (-0.5,1);

\end{scope}

\begin{scope}[shift={(0,-1.5)}]

\draw[fill=orange!20] (-0.5,1) .. controls (0,1) and (0.5,1.5) .. (1,1.5) .. controls (1.5,1.5) and (2.5,1) .. (3,1) .. controls (2.5,1) and (1.5,0.5) .. (1,0.5) .. controls (0.5,0.5) and (0,1) .. (-0.5,1);

\end{scope}

\begin{scope}[shift={(0,-1.5)}]
\draw (3,1) ..  node[fill=gray!20]{$\gamma^+$} controls (4.5,1)and (4.5,2.5) .. (3,2.5);

\end{scope}

\node at (1.25,1) {$K_{i}+\sqrt{-1}\cdot c$};
\node at (1.25,-0.5) {$K_{i}$};
\node at (4,1.5) {$\mathbb C$};
\end{tikzpicture}     \caption{Letting $K_i=K_1, K_2$ in the above diagram yields Lagrangian submanifolds related by an exact isotopy}
    \label{fig:circleLemma}
\end{figure}
 \section{Construction One: Squeezing Lagrangian tori in  $P(a,b)$}\label{sec:packing}
In this section, we present Construction \ref{t:1}. The idea of the construction is to view $P(a,b)$ as a symplectic disk bundle over a disk. Then we concatenate several Lagrangian suspension cobordisms of circles in the disk fibers to produce a Lagrangian in $P(a,b)$ that helps us displace $L(1,1)$ from the integral packing.

\begin{proof}[Proof of Construction \ref{t:1}]
Let $X$ be the standard symplectic disk of area $a>2$ and $D$ be the standard symplectic disk of area $b>1$ so that $P(a,b)=X \times D$.
Let $\pi_X$ and $\pi_D$ be the projections to the first and second factors, respectively.
Let $C_{X,m}=\pi_X(L(m,n))$ and $C_{D,n}=\pi_D(L(m,n))$ so that $L(m,n)=C_{X,m} \times C_{D,n}$.
Let $C_{D,1}' \subset D$ be a small Hamiltonian perturbation of $C_{D,1}$ such that $C_{D,1} \pitchfork C_{D,1}'$ consists of two points.
Let $L(1,1)'=C_{X,1} \times C_{D,1}'$.
To show that the integral packing of $P(a,b)$ by $\{L(m,n)\}$ is not maximal, it suffices to find a Lagrangian $L$ in $P(a,b)$ that is Hamiltonian isotopic to $L(1,1)$, disjoint from $L(1,1)'$ and disjoint from $L(m,n)$ for all $(m,n) \in \mathbb{N}^2 \setminus \{(1,1)\}$.
We regard $D$ as a subset of $\CC$ and $P(a,b) \subset X \times \CC$.
A portion of $L$ will be the suspension of a Hamiltonian function of $X$ (see \cref{eq:suspension}).
\begin{figure}
    \centering
    \begin{subfigure}{.45\linewidth}
        \centering
        \scalebox{.7}{\begin{tikzpicture}[scale=2]

\draw[dashed, fill=gray!20] (0,0) circle[radius=1.65];
\draw[red] (0,0) circle[radius=1.41];
\draw[blue] (0,0) circle[radius=1.];
\draw[rounded corners, orange] (-20:.7)-- (-20:.5) arc (-20:20:.5)-- (20:1.1)  arc (20:160:1.1) --(160:1.35)arc (160:-160:1.35)--(-160:1.1)arc (-160:-20:1.1)-- (-20:.7)   ;

\node[ blue, right] at (-1,0) {$C_{X, 1}$};
\node[ red, left] at (-1.425,0) {$C_{X, 2}$};
\node[ orange, left] at (0.5,0) {$S_{X, \epsilon}$};

\clip (0,0) circle[radius=1.];
\fill[rounded corners, pattern=north west lines, pattern color=black] (-20:1.1)-- (-20:.5) arc (-20:20:.5)-- (20:1.1)  ;

\node[fill=gray!20] at (0.75,0) {$\epsilon$};
\end{tikzpicture} }
        \caption{Lagrangians in $X$.}
        \label{fig:LagX}
    \end{subfigure}
    \begin{subfigure}{.45\linewidth}
        \centering
        \scalebox{.7}{
        \begin{tikzpicture}[scale=2]

\draw[dashed, fill=gray!20] (0,0) circle[radius=1.65];
\draw[red] (0,0) circle[radius=1.41];
\draw[blue] (0,0) circle[radius=1.];
\node[ blue, right] at (-1,0) {$C_{X, 1}$};
\node[ red, left] at (-1.425,0) {$C_{X, 2}$};

\draw[rounded corners, orange] (-20:1.5)-- (-20:1.6) arc (-20:20:1.6)-- (20:1.36)  arc (20:160:1.36) --(160:1.1)arc (160:-160:1.1)--(-160:1.36)arc (-160:-20:1.36)-- (-20:1.46)   ;

\node[ orange, fill=gray!20] at (1.7,0) {$\phi(S_{X, \epsilon})$};
\end{tikzpicture} }
        \caption{A Hamiltonian isotopy of $S_{X, \epsilon}$}
        \label{fig:problem1isotopy}
    \end{subfigure}
    \begin{subfigure}{.45\linewidth}
        \centering
        \scalebox{.7}{
        \begin{tikzpicture}[scale=2]
\fill[gray!20] (0,0) circle[radius=1.3];

\fill[pattern=north west lines, pattern color=green] (0,0) circle[radius=1.];
\fill[gray!20]  (0.325,0) circle[radius=1];

\draw (0,0) circle[radius=1.];
\draw  (0.325,0) circle[radius=1];
\draw[orange]  (-0.25,0) ellipse (1 and 1);
\node[left] at (1,0) {$C_{D,1}$};
\node[right] at (-0.7,0) {$C_{D, 1}'$};
\node[orange, left] at (-1.225,0) {$S_{D}$};
\draw[dotted] (0,-1) ellipse (0.25 and 0.25);

\node[fill=gray!20, scale=.8] at (-0.825,-0.025) {$U_0$};
\end{tikzpicture} }
        \caption{Lagrangians in $D$}
        \label{fig:sdepsilon}
    \end{subfigure}
    \begin{subfigure}{.45\linewidth}
        \centering
        \scalebox{.7}{
        \begin{tikzpicture}[scale=10]

\draw[dotted] (0,-1) ellipse (0.25 and 0.25);
\clip (0,-1) ellipse (0.25 and 0.25);

\fill[gray!20] (0,0) circle[radius=1.3];

\fill[pattern=north west lines, pattern color=green] (0,0) circle[radius=1.];
\fill[gray!20]  (0.325,0) circle[radius=1];

\draw (0,0) circle[radius=1.];
\draw[orange]  (-0.25,0) ellipse (1 and 1);
\draw  (0.325,0) ellipse (1 and 1);
\node[fill=gray!20] at (1,0) {$C_{D,1}$};
\node[fill=gray!20] at (-0.7,0) {$C_{D, 1}'$};
\node[orange, fill=gray!20] at (-1.225,0) {$S_{D,\epsilon}$};
\fill[orange] (-0.107,-0.9895) .. controls (-0.0685,-0.984) and (-0.1625,-0.898) .. (-0.1385,-0.898) .. controls (-0.121,-0.898) and (-0.05,-0.98) .. (-0.003,-0.969) .. controls (-0.05,-0.98) and (-0.025,-0.9925) .. (-0.05,-0.9975) .. controls (-0.0705,-0.9925) and (-0.0685,-0.984) .. (-0.107,-0.9895);
\node at (-0.2,-1.025) {$S_{X, \epsilon}$};
\node at (-0.175,-0.875) {$C'_{D, 1}$};
\node at (0.2,-0.95) {$C_{D, 1}$};
\node at (0.125,-0.875) {$\phi(S_{X, \epsilon})$};
\node at (-0.125,-1.015) {$p$};
\node at (0.04,-0.935) {$p'$};
\end{tikzpicture} }
        \caption{A modification of $S_{D}$ at the inset area from \cref{fig:sdepsilon}. The labels correspond to the $X$-component of the Lagrangian.}
        \label{fig:problem1squeezing}
    \end{subfigure}
    \caption{}
\end{figure}
Let $U_0$ and $U_1$ be the two small (closed) bigons in $D$ with one side on $C_{D,1}$ and the other side on $C_{D,1}'$, and denote the area of $U_i$ by $\delta$. The $L$ we construct will have shadow inside $U_0$.

For $\epsilon>0$, let $S_{X,\epsilon} \subset X$ be an embedded circle such that
\begin{itemize}
    \item it is disjoint from $C_{X,m}$ for all $m \ge 2$
    \item it is Hamiltonian isotopic to $C_{X,1}$ by a compactly supported Hamiltonian
    \item the intersection between the disk bounded by $C_{X,1}$ and the disk bounded by $S_{X,\epsilon}$ is a simply connected region with area less than $\epsilon$
\end{itemize}
(see Figure \ref{fig:LagX}).
With the last assumption, if $a>2+\epsilon$, then we can find a compactly supported Hamiltonian $H_t \in C^{\infty}([0,1] \times X)$ such that $\|H_t\|=\epsilon$ and the time-$1$ flow
$\phi_{H_t}^1(S_{X,\epsilon}) \cap C_{X,1}=\emptyset$ (see \cref{fig:problem1isotopy}).
We choose $\epsilon<\min\{a-2, \delta/3\}$ and fix such a choice of $H$.

Let $S_D \subset D$  be a small Hamiltonian perturbation of $C_{D,1}$ such that 
\begin{itemize}
    \item it is disjoint from $C_{D,n}$ for all $n \ge 2$
    \item $S_D \cap C_{D,1} \cap C_{D,1}'=\emptyset$
    \item $S_D \pitchfork C_{D,1}$ consists of two points  $p,q \in \partial U_0$ 
    \item $S_D \pitchfork C_{D,1}'$ consists of two points $p',q' \in \partial U_0$  
\end{itemize}
By possibly relabeling, we assume that $S_D \cap U_0=I_{p,p'} \cup I_{q,q'}$ where $\partial I_{p,p'}=\{p,p'\}$
and $\partial I_{q,q'}=\{q,q'\}$.
We can also assume that $\overline{S_D \setminus (I_{p,p'} \cup I_{q,q'})}=I_{p,q} \cup I_{p',q'}$ where 
$\partial I_{p,q}=\{p,q\}$ and $\partial I_{p',q'}=\{p',q'\}$
(see Figure \ref{fig:sdepsilon}).

By construction, the product Lagrangian $L_S:=S_{X,\epsilon} \times S_D$ does not intersect $L(m,n)$ for all $(m,n) \neq (1,1)$.
It is also clear that $L_S$ is Hamiltonian isotopic to $L(1,1)$.
However, $L_S$ intersects $L(1,1)'$ non-trivially in the fibers $X \times \{p',q'\}$, so we need to modify $L_S$ to get our $L$.

To do that, recall that $3\epsilon < \delta=\operatorname{area}(U_0)$.
We replace $S_{X,\epsilon} \times I_{p,p'} \subset L_S$ by a suspension $L_{H,p}$ of $H_t$ such that (cf. Example \ref{e:suspensionarea} and Proposition \ref{p:modifysuspension})
\begin{itemize}
    \item $\pi_D(L_{H,p}) \subset U_0$ and it has area $\epsilon$,
    \item $L_{H,p} \cap X \times C_{D,1}=S_{X,\epsilon} \times \{p\}$
    \item $L_{H,p} \cap X \times C_{D,1}'=\phi_{H}(S_{X,\epsilon}) \times \{p'\}$
\end{itemize}
as drawn in \cref{fig:problem1squeezing}. By assumption, $\phi_{H}(S_{X,\epsilon}) \cap C_{X,1}=\emptyset$ so $L_{H,p} \cap L(1,1)' =\emptyset$.
Similarly,  we replace $S_{X,\epsilon} \times I_{q,q'} \subset L_S$ by a suspension $L_{H,q}$ of $H$ such that the corresponding conditions are satisfied.
Moreover, we can assume that $\pi_D(L_{H,p}) \cap \pi_D(L_{H,q}) =\emptyset$ because $3\epsilon < \delta$.

Our desired $L$ is the union of $S_{X,\epsilon} \times I_{p,q}$, $L_{H,p}$, $L_{H,q}$ and $\phi_{H}(S_{X,\epsilon}) \times I_{p',q'}$.
It is clear that $L$ is an embedded Lagrangian which is disjoint from $L(1,1)'$ and $L(m,n)$ for all $(m,n) \neq (1,1)$.
Moreover, there is an exact Lagrangian isotopy from $L$ to $L_S$ given by \cref{lem:circlelemma}.

\end{proof}

\begin{corollary}\label{c:packings}
When $a>2$ and $b>1$, there are $3$ pairwise disjoint integral Lagrangians in $P(a,b)$.
Moreover, two of them are Hamiltonian isotopic to $L(1,1)$ and the remaining one is Hamiltonian isotopic to $L(2,1)$.

When $a>2$ and $b>2$, there are $6$ pairwise disjoint integral Lagrangians in $P(a,b)$.
Moreover, two of them are Hamiltonian isotopic to $L(1,1)$, two of them are Hamiltonian isotopic to $L(1,2)$, one of them is Hamiltonian isotopic to $L(2,1)$ and the remaining one is Hamiltonian isotopic to $L(2,2)$.
\end{corollary}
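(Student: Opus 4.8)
The two statements follow by extracting suitable Lagrangians from the proof of Construction~\ref{t:1} and running its local modification once, respectively twice. Throughout I use that being an integral Lagrangian is preserved by symplectomorphisms -- in particular by Hamiltonian isotopies -- since these do not change the area homomorphism on $H_2$.

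\textbf{The regime $a>2$, $b>1$.} The proof of Construction~\ref{t:1} (set up so that its hypotheses read $2<a$ and $1<b$, i.e.\ with its ``$X$'' the area-$a$ disk and its ``$D$'' the area-$b$ disk) produces an embedded Lagrangian $L\subset P(a,b)$ Hamiltonian isotopic to $L(1,1)$, disjoint from $L(1,1)'=C_{X,1}\times C_{D,1}'$, and disjoint from $L(m,n)$ for every $(m,n)\neq(1,1)$. The plan is to output the triple $\{L,\ L(1,1)',\ L(2,1)\}$. All three are integral ($L$ and $L(1,1)'$ are Hamiltonian isotopic to $L(1,1)$, and $L(2,1)$ is integral since $2<a$ and $1<b$), and they are pairwise disjoint: $L\cap L(2,1)=\emptyset$ because $(2,1)\neq(1,1)$, and $L(1,1)'\cap L(2,1)=\emptyset$ because their $X$-projections $C_{X,1}$ and $C_{X,2}$ are disjoint concentric circles. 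Two of the three are Hamiltonian isotopic to $L(1,1)$ and the third to $L(2,1)$, as claimed.

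\textbf{The regime $a>2$, $b>2$.} Now I would run the modification from the proof of Construction~\ref{t:1} twice, sharing the $X$-side data. The first run is exactly as written and yields $L\simeq L(1,1)$, localized in the $D$-direction inside a small bigon near $C_{D,1}$, together with the pushoff $L(1,1)'$. For the second run, keep the \emph{same} curve $S_{X,\epsilon}$ and constant $\epsilon$ (their choice only used $2<a$) and replace $C_{D,1},C_{D,1}',S_D,U_0$ by analogous data $C_{D,2},C_{D,2}',S_D'',U_0''$ built near $C_{D,2}$; this is possible because $2<b$, after choosing the perturbations and bigons small enough. The output is an embedded $\tilde L\simeq L(1,2)$, localized near $C_{D,2}$, together with a pushoff $L(1,2)'=C_{X,1}\times C_{D,2}'$; as before, $\tilde L$ is disjoint from $L(1,2)'$ and from $L(m,n)$ for every $(m,n)\neq(1,2)$. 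I would then output the six Lagrangians
\[
L,\qquad L(1,1)',\qquad \tilde L,\qquad L(1,2)',\qquad L(2,1),\qquad L(2,2),
\]
which are Hamiltonian isotopic to $L(1,1),L(1,1),L(1,2),L(1,2),L(2,1),L(2,2)$ respectively, hence all integral ($L(2,1),L(2,2)$ literally so, as $2<a,b$). Pairwise disjointness is a finite check from three observations: (i) $L$ avoids $L(1,1)'$ and every $L(m,n)$ with $(m,n)\neq(1,1)$, and likewise $\tilde L$ avoids $L(1,2)'$ and every $L(m,n)$ with $(m,n)\neq(1,2)$; (ii) two products $C_{X,m}\times C_{D,n}$ and $C_{X,m'}\times C_{D,n'}$ with $m\neq m'$ or $n\neq n'$ are disjoint, since the $C_{X,\bullet}$ (resp.\ the $C_{D,\bullet}$) are disjoint concentric circles -- and the pushoffs $C_{X,1}\times C_{D,i}'$ fit this pattern after shrinking; (iii) $L$ and $\tilde L$ project, in the $D$-direction, into disjoint neighborhoods of $C_{D,1}$ and $C_{D,2}$, so $L\cap\tilde L=\emptyset$ and each of $L,\tilde L$ is disjoint from the pushoff living near the other circle.

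\textbf{Where the difficulty lies.} Construction~\ref{t:1} does the real work and the corollary is essentially a repackaging; the one thing demanding care, and it is pure bookkeeping with the figures in the proof of Construction~\ref{t:1}, is that in the regime $b>2$ the two runs of the modification admit mutually compatible choices -- a shared $S_{X,\epsilon}$ and $\epsilon$, disjoint bigons near $C_{D,1}$ and near $C_{D,2}$, and perturbations $C_{D,1}',C_{D,2}',S_D,S_D''$ small enough that the six Lagrangians above are embedded and pairwise disjoint. Because the $X$-side is shared and independent of the $D$-side while the two modifications are supported near the disjoint circles $C_{D,1}$ and $C_{D,2}$, there is no genuine interaction, and one only has to verify the relevant inequalities ($2<a$ for both splittings, $1<b$ and $2<b$ for the two $D$-side locations, and $2<b$ so that $L(2,2)$ is integral), all of which hold under $a>2$, $b>2$.
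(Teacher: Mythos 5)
Your proposal is correct and follows essentially the same route as the paper: the first statement is read off directly from the triple $L$, $L(1,1)'$, $L(2,1)$ produced by Construction~\ref{t:1}, and the second is obtained by repeating the local modification in a neighborhood of $X\times C_{D,2}$ (possible since $b>2$) and assembling the six Lagrangians you list. Your write-up simply makes explicit the disjointness bookkeeping and the swap of the roles of $a$ and $b$ that the paper leaves implicit.
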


\begin{proof}
The first statement is a direct consequence of the previous construction.
To explain the second statement, we use the notation in the previous proof.
Recall that the construction happens in a neighborhood of $X \times C_{D,1}$.
Therefore, when $b>2$ (i.e. the area of $D$ is greater than $2$), we can apply similar construction in a neighborhood of $X \times C_{D,2}$ to get an extra integral Lagrangian.
This implies the second statement.

\end{proof}
 \section{Construction Two: On displacing Lagrangian links in $S^2 \times S^2$}

In this section, we present Construction \ref{t:2}.
The nature of this problem is similar to the previous one. 
In fact, Construction \ref{t:2} can be viewed as a generalization of Construction \ref{t:1}.

\begin{proof}[Proof of Construction \ref{t:2}]

Let $X=S^2_{2B+C}$ and $D=S^2_{2a}$.
Assume $B-C \ge 0$; otherwise, the construction is obvious.
Let $K_a' \subset D$ be a circle that is Hamiltonian isotopic to $K_a$ such that $K_a \pitchfork K_a'$ consists of two points.
To complete the construction, it suffices to find a Lagrangian $L$ in $X \times D$ that is Hamiltonian isotopic to $L_1 \times K_a$
and disjoint from both $L_1 \times K_a'$ and $L_2 \times K_a$.

Let $U_0,U_1$ and $V_0,V_1$ be the four (closed) bigons in $D$ with one side on $K_a$ and the other side on $K_a'$ such that their interiors are pairwise disjoint.
By possibly relabeling, we assume that $U_0$ and $U_1$ have the same area, denoted by $\delta_U$,
and that $V_0$ and $V_1$ have the same area, denoted by $\delta_V$.
Therefore, we have $2a=2(\delta_U+\delta_V)$.
Without loss of generality, we assume that $\delta_U \ge \delta_V$.

For $\epsilon>0$, let $S_{X,\epsilon} \subset X$ be an embedded circle such that
\begin{itemize}
    \item it is disjoint from $L_2$
    \item it is Hamiltonian isotopic to $L_1$
    \item the intersection between the disk with area $B$ bounded by $L_1$ and the disk with area $B$ bounded by $S_{X,\epsilon}$ is a simply connected region with area less than $B-C+\epsilon$
\end{itemize}
With the last assumption, we can find a compactly supported Hamiltonian $H \in C^{\infty}([0,1] \times X)$ such that $\|H\|=B-C+\epsilon$ and 
$\phi_H(S_{X,\epsilon}) \cap L_1=\emptyset$.
We fix such a choice of $H$.

Let $S_D \subset D$ be a circle such that 
\begin{itemize}
    \item it is Hamiltonian isotopic to $K_a$
    \item $S_D \pitchfork K_a$ consists of two points  $p,q \in \partial U_0$ 
    \item $S_D \pitchfork K_a'$ consists of two points $p',q' \in \partial U_0$  
    \item it is disjoint from $U_1$
\end{itemize}
By possibly relabeling, we assume that $S_D \cap U_0=I_{p,p'} \cup I_{q,q'}$, $S_D \cap V_0=I_{p,q}$ and $S_D \cap V_1= I_{p',q'}$, where the subscripts of $I$ are the two endpoints.

By construction, the product Lagrangian $L_S:=S_{X,\epsilon} \times S_D$ does not intersect $L_2 \times K_a$ but it intersects $L_1 \times K_a'$ non-trivially in the fibers $X \times \{p',q'\}$, so we need to modify $L_S$ to get our $L$.

To do that, we apply the same suspension trick as in Construction \ref{t:1} to replace $S_{X,\epsilon} \times I_{p,p'}$, $S_{X,\epsilon} \times I_{q,q'}$ and $S_{X,\epsilon} \times I_{p',q'}$
by $L_{H,p}$, $L_{H,q}$ and $\phi_H(S_{X,\epsilon}) \times I_{p',q'}$, respectively.
This is possible when the area of the shadow, $2(B-C+\epsilon)$, is less than $\delta_U$. By letting $\delta_V$ be sufficiently small, $\delta_U$ can be arbitrarily close to $a$ so the construction works as long as 
$2(B-C)< a$.

\end{proof}

The proof of Construction \ref{t:2'} is entirely parallel.

\section{Construction Three: Lagrangian submanifolds in  $S^2\times S^2$ of low genus}
\subsection{Isotopic Lagrangians are Lagrangian cobordant}

Construction \ref{t:3} is based on a special instance of the following general observation.

\begin{lemma}
    Let $[\eta]\in H^1(L, \RR)$ be Poincar\'{e} dual to $[H]\in H_{1}(L, \ZZ)$, where $\eta$ is a closed $1$-form and $H$ is an embedded hypersurface of $L$. Let $L_\eta\in T^*L$ be the Lagrangian section given by the graph of $\eta$. There exists an embedded Lagrangian cobordism $K_{\eta,H}: L\rightsquigarrow L_\eta$.
    \label{lem:isotopicGivesCobordism}
\end{lemma}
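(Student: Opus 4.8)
The plan is to build $K_{\eta,H}$ by a suspension construction near $H$, gluing in a "handle-like" cobordism that realizes the cohomology class $[\eta]$. The key observation is that if $\eta$ is \emph{exact}, say $\eta = dg$ for $g: L \to \RR$, then the graph $L_\eta$ is Hamiltonian isotopic to the zero section $L_0$ inside a Weinstein neighborhood, and one gets an embedded Lagrangian cobordism for free from Example \ref{e:suspensionarea} (suspension of the exact isotopy, with shadow equal to the relevant Höfer norm). So the whole difficulty is concentrated in the case where $[\eta] \neq 0$, and the role of $H$ is to tell us where to "cut" $L$ so as to trivialize the class.

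First I would set up the model near $H$. Choose a tubular neighborhood $H \times (-1,1) \subset L$ with coordinate $s$ on the second factor, and arrange (by adding an exact form, which changes nothing up to Hamiltonian isotopy in $T^*L$) that $\eta$ is supported in $H \times (-1,1)$ and equals $\psi'(s)\, ds$ there, where $\psi: (-1,1) \to \RR$ is a function with $\psi \equiv 0$ near $s = -1$ and $\psi \equiv c$ near $s = 1$, the constant $c$ being the period of $\eta$ across $H$ (without loss of generality $c > 0$; the Poincaré duality hypothesis guarantees this is the \emph{only} period data, i.e. all other periods vanish after the normalization). Now $L_\eta$ agrees with $L_0$ outside $T^*(H\times(-1,1))$, so it suffices to produce a cobordism supported over this neighborhood, i.e. to solve the problem for the pair (zero section, graph of $\psi'(s)ds$) inside $T^*H \times T^*(-1,1)$. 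Splitting off the $T^*H$ factor trivially, this reduces to a one-dimensional problem: in $T^*(-1,1) \cong T^*\RR$ with coordinates $(s, p)$, cobord the zero section $\{p=0\}$ to the curve $\{p = \psi'(s)\}$, which is a graph over the $s$-axis that is the zero section near both ends.

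The second step is the actual construction of this $1$-dimensional (times $H$) cobordism, and this is the main obstacle: it is \emph{not} a suspension of an exact isotopy of the circle/line, because the net flux $c$ is nonzero, so one cannot simply invoke Example \ref{e:suspensionarea}. Instead I would build $K_{\eta,H}$ over a "lip"-shaped region in the $\CC$-plane: take the cobordism whose $\CC$-projection near the two ends is horizontal rays at heights $0$ and (say) $1$ respectively, and which over a compact middle interval interpolates by a Lagrangian graph construction — concretely, $K$ is cut out as the graph $\{(s, p, z)\}$ where one treats $z \in \CC$ as an auxiliary parameter and uses a family of functions $\psi_t$, $t = \Re(z)$, deforming $\psi \equiv 0$ to $\psi$, together with a compensating imaginary part so that the total object is Lagrangian in $T^*(-1,1) \times \CC$; this is exactly the type of "propagating a front" construction used for the surgery trace cobordism in Example \ref{exam:surgery} and, more precisely, matches the $\bar i^L_s$ construction of \cite[Page 53]{hicks2021lagrangian} cited just above for Proposition \ref{lem:circlelemma}. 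Embeddedness follows because the interpolation can be taken monotone in $t$ and the $H$-direction is just along for the ride; the fibered-over-ends and compactness conditions are immediate from how $\psi_t$ and the imaginary part are chosen to be constant outside a compact set. One should double-check that the resulting $L_0$-end is genuinely the zero section and the $L_\eta$-end is genuinely $L_\eta$ (not $L_\eta$ shifted), which is a matter of normalizing the endpoints of the family $\psi_t$.

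Finally, I would record the quantitative byproduct, since the paper emphasizes shadows: the shadow of $K_{\eta,H}$ constructed this way can be taken arbitrarily close to $c \cdot \mathrm{(size\ of\ the\ }s\mathrm{-neighborhood)}$, or after optimizing, proportional to $|c| = $ the period of $[\eta]$ across $[H]$ — and more generally one can iterate/concatenate over a basis of $H^1(L,\RR)$ represented by disjoint hypersurfaces to handle an arbitrary class, with shadow bounded by a sum of the corresponding periods. The Corollary \ref{c:LagIsotopic} quoted in the introduction then follows: an arbitrary Lagrangian isotopy from $L_0$ to $L_1$ with both orientable moves $L_1$, inside a Weinstein neighborhood of $L_0$, to the graph of a closed $1$-form $\eta$ whose class is the flux of the isotopy; applying the lemma to this $\eta$ (with $H$ any hypersurface Poincaré dual to $[\eta]$, e.g. after a small perturbation making $[\eta]$ rational and clearing denominators, then taking a limit) produces the desired cobordism, and concatenating with the trivial-flux (hence suspension) part finishes the argument.
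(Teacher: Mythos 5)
There is a genuine gap, and it sits exactly at the step you flagged as "the main obstacle." Your reduction to the local model is fine: after normalization $\eta=\psi'(s)\,ds$ is supported in $H\times(-1,1)$, and you want a cobordism that is the trivial product over $L\setminus (H\times(-1,1))$ and a "compensated suspension" (a family $\psi_t$ interpolating $0$ to $\psi$, with an imaginary part making the total object Lagrangian) over the neighborhood. But such an object cannot glue. Concretely, over the neighborhood your construction is the graph of $dF$ for a function $F(s,t)$ with $F$ locally constant in $s$ for $t\ll 0$ and $F=\psi(s)+(\text{const})$ (up to the height shift) for $t\gg 0$; the "compensating imaginary part" is forced to be $\partial_t F$. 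Near the two seams $s=\pm 1$ the slice is the zero section, so the cobordism there is (zero section)$\times\gamma_{\pm}$ with $\gamma_{\pm}$ the graph of $t\mapsto \partial_t F(\pm 1,t)$. Since
\[
\int_{\RR}\bigl(\partial_t F(1,t)-\partial_t F(-1,t)\bigr)\,dt \;=\;\bigl[F(1,t)-F(-1,t)\bigr]_{t=-\infty}^{+\infty}\;=\;c\neq 0,
\]
the two seam curves differ (they enclose area $c$, the period of $\eta$ across $H$). In the nontrivial case $[\eta]\neq 0$ the complement of the neighborhood in $L$ is connected, so the cobordism over it is a product with a \emph{single} curve in $\CC$, and it cannot match both seams. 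This is the non-exactness of $\eta$ reappearing at the gluing: a graphical/monotone-in-$t$ interpolation with embedded section slices is exactly a suspension, and suspensions require exact isotopies. (The same obstruction shows the local relative problem in $T^*(-1,1)$ is not solvable by a relative Hamiltonian isotopy: the signed area between the zero section and the graph of $d\psi$, rel ends, is $c$.) Your appeal to the surgery-trace and $\overline i^L_s$ constructions does not help, since those handle transverse surgeries and exact isotopies respectively.

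The paper's proof gets around precisely this point by a different mechanism: it introduces auxiliary closed Lagrangians $H\times\gamma_1$, $H\times\gamma_2$ in $T^*H\times T^*[-1,1]$, performs clean surgery of $L$ with them along $H\times I_i$ (surgery trace $K_2$), then does an \emph{exact} homotopy of the resulting immersed Lagrangian which transfers the area $c$ between regions bounded by the closed curves (this is where the flux is actually realized, cf.\ the areas in \cref{fig:exacthomotopy}), then antisurgery and a final Hamiltonian isotopy to $L_\eta$, and only at the end resolves the double points of the assembled immersed cobordism. The price is that the resulting $K_{\eta,H}$ is topologically nontrivial ($\underline b_1>0$), whereas your proposed cobordism would be a cylinder $L\times\RR$ with $\underline b_1=0$ --- which the paper's own subsequent discussion (the trade-off between shadow, flux and $\underline b_1$) indicates should not exist for non-exact $\eta$. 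So the missing idea is precisely the device (auxiliary Lagrangians plus surgery/antisurgery, or some equivalent) that absorbs the period $c$; without it, the construction in your second step does not produce a well-defined Lagrangian with the required ends.
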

\begin{proof}
    Let $H\times [-1, 1]$ be a normal neighborhood of $H$ in $L$. We can split $T^*(H\times [-1, 1])= T^*H\times T^*[-1, 1]$. Let $(q, p)$ be the coordinates on $T^*[-1, 1]$. 
    Consider the curves $\gamma_1, \gamma_2\subset T^*[-1, 1]$ drawn in \cref{fig:surgerysetup}, which are chosen so that the area contained by the curve is at least 30. The Lagrangians $H\times \gamma_1$ and $H\times \gamma_2$ are Hamiltonian isotopic. Let $K_1: H\times \gamma_1\rightsquigarrow H\times \gamma_2$ be the suspension cobordism.

     $(H\times \gamma_1)\cup (H\times \gamma_2)$ intersects $H\times [-1, 1]\subset L$ cleanly inside $T^*L$. Let $I_i:=[-1, 1]\cap \gamma_i$. Using \cite[Corollary 2.22]{mak2018dehn} we can perform a clean Lagrangian surgery, and let 
    \[L':=(H\times \gamma_1)\#_{H\times I_1} L \#_{H\times I_2}( H\times \gamma_2).\] We have a surgery trace cobordism
    \[K_2: (H\times \gamma_1, L, H\times \gamma_2)\rightsquigarrow L'\]
    The resulting surgery (drawn in \cref{fig:aftersurgery}) has two connected components, both of which are immersed. On the top connected component, perform an exact homotopy from $L'$ to $L''$ in the $T^*[-1, 1]$ factor  as indicated in \cref{fig:exacthomotopy}. The exact homotopy is constructed so that the area swept over the blue region is 1 and so that each of the red regions has an area of $1/2$. Let $K_{3}: L'\rightsquigarrow L''$ be the suspension of this exact homotopy. Let $K_4:L''\rightsquigarrow (H\times \gamma_1', L''', H\times \gamma_2')$ be the antisurgery trace cobordism from \cref{fig:exacthomotopy} to \cref{fig:finallagrangian}. The Lagrangians $H\times \gamma_1'$ and $H\times \gamma_2'$ are Hamiltonian isotopic; let $K_5$ be the suspension of this Hamiltonian isotopy. Note that 
    \[L'''=(L\setminus( H\times [-1, 1]))\cup (H\times \gamma_3)\]
    where $\gamma_3$ is indicated in \cref{fig:finallagrangian}.
    This is the graph of a section $\eta'$ of $T^*L$ with $[\eta']=PD(H)=[\eta]$. We can therefore find a Hamiltonian isotopy between $L'''$ and  $L_\eta$. Let $K_6: L'''\rightsquigarrow L_\eta$ be the suspension of this isotopy.

    Finally, let $K^{im}: L\rightsquigarrow L_\eta$ be the Lagrangian cobordism constructed by assembling $K_1, \ldots, K_6$ according to \cref{fig:assembly}. This is an immersed Lagrangian cobordism; by perturbing and resolving the double points we obtain an embedded Lagrangian cobordism $K_{\eta,H}: L\rightsquigarrow L_\eta$. 
\end{proof}
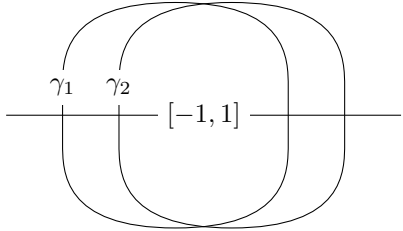
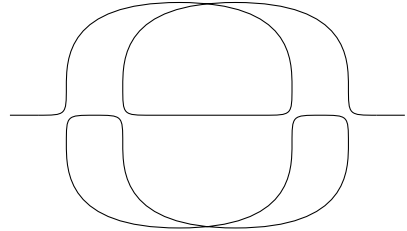
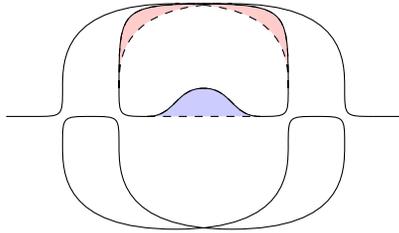
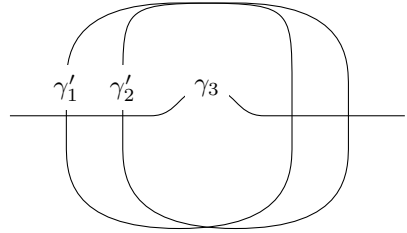
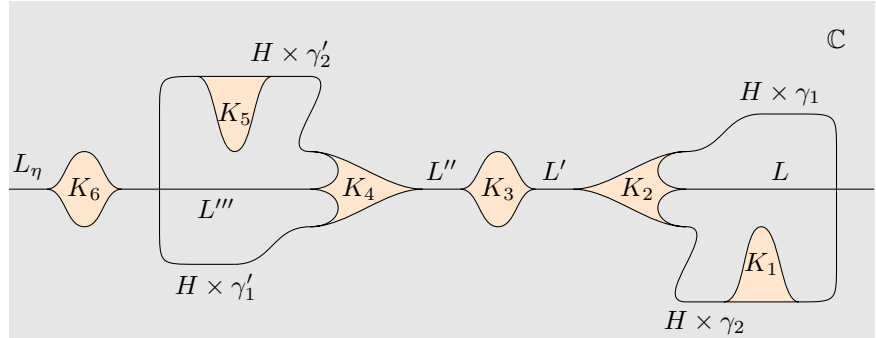
\begin{figure}\centering
    \begin{subfigure}{.4\linewidth}
    \centering
    \begin{tikzpicture}[scale=.75]

    \draw (-3.5,0) -- (0,0) node[fill=white] {$[-1, 1]$} -- (3.5,0);
    \begin{scope}[]
    \draw (-2.5,0.5) node[fill=white] {$\gamma_1$}.. controls (-2.5,1) and (-2.5,2) .. (-0.5,2) .. controls (1.5,2) and (1.5,1) .. (1.5,0.5) .. controls (1.5,0) and (1.5,0) .. (1.5,-0.5) .. controls (1.5,-1) and (1.5,-2) .. (-0.5,-2) .. controls (-2.5,-2) and (-2.5,-1) .. (-2.5,-0.5) .. controls (-2.5,0) and (-2.5,0) .. (-2.5,0.5);
    
    \end{scope}
    \begin{scope}[shift={(1,0)}]
    \draw (-2.5,0.5) node[fill=white] {$\gamma_2$}.. controls (-2.5,1) and (-2.5,2) .. (-0.5,2) .. controls (1.5,2) and (1.5,1) .. (1.5,0.5) .. controls (1.5,0) and (1.5,0) .. (1.5,-0.5) .. controls (1.5,-1) and (1.5,-2) .. (-0.5,-2) .. controls (-2.5,-2) and (-2.5,-1) .. (-2.5,-0.5) .. controls (-2.5,0) and (-2.5,0) .. (-2.5,0.5);
    
    \end{scope}
    
    \end{tikzpicture}     \caption{Projection of  $H\times\gamma_1, H\times \gamma_2, H\times [-1, 1]\subset T^*H\times T^*[-1, 1]$ to the $T^*[-1, 1]$ component. }
    \label{fig:surgerysetup}
    \end{subfigure}
    \hspace{.1\linewidth}
    \begin{subfigure}{.4\linewidth}
        \centering
        \begin{tikzpicture}[scale=.75]

\draw (-3,0) .. controls (-2.5,0) and (-2.5,0) .. (-2.5,0.5) .. controls (-2.5,1) and (-2.5,2) .. (-0.5,2) .. controls (1.5,2) and (1.5,1) .. (1.5,0.5) .. controls (1.5,0) and (1.5,0) .. (1,0) .. controls (0.5,0) and (-0.5,0) .. (-1,0) .. controls (-1.5,0) and (-1.5,0) .. (-1.5,0.5) .. controls (-1.5,1) and (-1.5,2) .. (0.5,2) .. controls (2.5,2) and (2.5,1) .. (2.5,0.5) .. controls (2.5,0) and (2.5,0) .. (3,0);
\draw (-2,0) .. controls (-2.5,0) and (-2.5,0) .. (-2.5,-0.5) .. controls (-2.5,-1) and (-2.5,-2) .. (-0.5,-2) .. controls (1.5,-2) and (1.5,-1) .. (1.5,-0.5) .. controls (1.5,0) and (1.5,0) .. (2,0) .. controls (2.5,0) and (2.5,0) .. (2.5,-0.5) .. controls (2.5,-1) and (2.5,-2) .. (0.5,-2) .. controls (-1.5,-2) and (-1.5,-1) .. (-1.5,-0.5) .. controls (-1.5,0) and (-1.5,0) .. (-2,0);
\draw (-3.5,0) -- (-3,0) (3,0) -- (3.5,0);
\end{tikzpicture}         \caption{Performing surgery at the clean self-intersections to obtain the Lagrangian $L'$.}
        \label{fig:aftersurgery}
    \end{subfigure}
    \begin{subfigure}{.4\linewidth}
        \centering
        \begin{tikzpicture}[scale=.75]

\draw[dashed, fill=red!20] (-1.5,0.5) .. controls (-1.5,1) and (-1.5,2) .. (0.5,2) .. controls (-1.5,2) and (-1.5,2) .. (-1.5,0.5);
\draw[dashed, fill=red!20] (1.5,0.5) .. controls (1.5,2) and (1.5,2) .. (-0.5,2) .. controls (1.5,2) and (1.5,1) .. (1.5,0.5);
\draw[dashed, fill=blue!20] (-1,0) .. controls (-0.5,0) and (0.5,0) .. (1,0) .. controls (0.5,0) and (0.5,0.5) .. (0,0.5) .. controls (-0.5,0.5) and (-0.5,0) .. (-1,0);

\draw (-3,0) .. controls (-2.5,0) and (-2.5,0) .. (-2.5,0.5) .. controls (-2.5,1) and (-2.5,2) .. (-0.5,2) .. controls (1.5,2) and (1.5,2) .. (1.5,0.5) .. controls (1.5,0) and (1.5,0) .. (1,0) ;
\draw (-1,0) .. controls (-1.5,0) and (-1.5,0) .. (-1.5,0.5) .. controls (-1.5,2) and (-1.5,2) .. (0.5,2) .. controls (2.5,2) and (2.5,1) .. (2.5,0.5) .. controls (2.5,0) and (2.5,0) .. (3,0);
\draw (-2,0) .. controls (-2.5,0) and (-2.5,0) .. (-2.5,-0.5) .. controls (-2.5,-1) and (-2.5,-2) .. (-0.5,-2) .. controls (1.5,-2) and (1.5,-1) .. (1.5,-0.5) .. controls (1.5,0) and (1.5,0) .. (2,0) .. controls (2.5,0) and (2.5,0) .. (2.5,-0.5) .. controls (2.5,-1) and (2.5,-2) .. (0.5,-2) .. controls (-1.5,-2) and (-1.5,-1) .. (-1.5,-0.5) .. controls (-1.5,0) and (-1.5,0) .. (-2,0);
\draw (-3.5,0) -- (-3,0) (3,0) -- (3.5,0);
\draw (-1,0) .. controls (-0.5,0) and (-0.5,0.5) .. (0,0.5) .. controls (0.5,0.5) and (0.5,0) .. (1,0);
\end{tikzpicture}         \caption{$L''$, which is exactly homotopic to $L'$. The red regions have an area of 1/2, and the blue region has an area of 1.}
        \label{fig:exacthomotopy}
    \end{subfigure} 
    \hspace{.1\linewidth}
    \begin{subfigure}{.4\linewidth}
        \centering
        \begin{tikzpicture}[scale=.75]
    \begin{scope}[]
    \draw (-2.5,0.5) node[fill=white] {$\gamma_1'$}.. controls (-2.5,1) and (-2.5,2) .. (-0.5,2) .. controls (1.5,2) and (1.5,2) .. (1.5,0.5) .. controls (1.5,0) and (1.5,0) .. (1.5,-0.5) .. controls (1.5,-1) and (1.5,-2) .. (-0.5,-2) .. controls (-2.5,-2) and (-2.5,-1) .. (-2.5,-0.5) .. controls (-2.5,0) and (-2.5,0) .. (-2.5,0.5);
    
    \end{scope}
    \begin{scope}[shift={(1,0)}]
    \draw (-2.5,0.5) node[fill=white] {$\gamma_2'$}.. controls (-2.5,2) and (-2.5,2) .. (-0.5,2) .. controls (1.5,2) and (1.5,1) .. (1.5,0.5) .. controls (1.5,0) and (1.5,0) .. (1.5,-0.5) .. controls (1.5,-1) and (1.5,-2) .. (-0.5,-2) .. controls (-2.5,-2) and (-2.5,-1) .. (-2.5,-0.5) .. controls (-2.5,0) and (-2.5,0) .. (-2.5,0.5);
    
    \end{scope}
    
\draw (-3.5,0) .. controls (-3,0) and (-1.5,0) .. (-1,0) .. controls (-0.5,0) and (-0.5,0.5) .. (0,0.5)node[fill=white] {$\gamma_3$} .. controls (0.5,0.5) and (0.5,0) .. (1,0) .. controls (1.5,0) and (3,0) .. (3.5,0);
\end{tikzpicture}         \caption{Performing antisurgery. The Lagrangians $\gamma_1'$ and $\gamma_2'$ are Hamiltonian isotopic.}
        \label{fig:finallagrangian}
    \end{subfigure}
    \begin{subfigure}{\linewidth}
        \centering
        \begin{tikzpicture}[xscale=-1,yscale=-1]

\fill[gray!20]  (2.5,-3) rectangle (14,1.5);
\node at (3,1) {$\mathbb C$};
\begin{scope}[shift={(0,0)}]
\draw (2.5,-1) --  node [above] {$L$}(5,-1);
\draw[fill=orange!20] (3.5,-2.5) .. controls (3.75,-2.5) and (4.2,-2.5) .. (4.5,-2.5) .. controls (4.2,-2.5) and (4.25,-1.5) .. (4,-1.5) .. controls (3.75,-1.5) and (3.75,-2.5) .. (3.5,-2.5);
\node at (4,-2) {$K_1$};
\draw[fill=orange!20] (5,-1.5) .. controls (5.5,-1.5) and (5.5,-1) .. (5,-1) .. controls (5.5,-1) and (5.5,-0.5) .. (5,-0.5) .. controls (5.5,-0.5) and (6,-1) .. (6.5,-1) .. controls (6,-1) and (5.5,-1.5) .. (5,-1.5);
\draw (3.5,-2.5) .. controls (3,-2.5) and (3,-2.5) .. (3,-2) .. controls (3,-1) and (3,-1) .. (3,-0.5) .. controls (3,0) and (3,0) .. (3.5,0);
\draw (5,-2.5) .. controls (5.5,-2.5) and (4.5,-1.5) .. (5,-1.5);
\draw (4,0) .. controls (4.5,0) and (4.5,-0.5) .. (5,-0.5);
\draw (3.5,0) -- node [below] {$H\times \gamma_1$} (4,0);
\draw (4.5,-2.5) -- node [above] {$H\times \gamma_2$} (5,-2.5);

\end{scope}
\begin{scope}[]

\draw[fill=orange!20] (7,-1) .. controls (7.25,-1) and (7.25,-0.5) .. (7.5,-0.5) .. controls (7.75,-0.5) and (7.75,-1) .. (8,-1) .. controls (7.75,-1) and (7.75,-1.5) .. (7.5,-1.5) .. controls (7.25,-1.5) and (7.25,-1) .. (7,-1);

\node at (7.5,-1) {$K_3$};
\end{scope}
\begin{scope}[rotate=180, shift={(-15,2)}]
\draw (2.5,-1) --  node [below] {$L'''$}(5,-1);
\draw[fill=orange!20] (3.5,-2.5) .. controls (3.75,-2.5) and (4.25,-2.5) .. (4.5,-2.5) .. controls (4.25,-2.5) and (4.25,-1.5) .. (4,-1.5) .. controls (3.75,-1.5) and (3.75,-2.5) .. (3.5,-2.5);
\node at (4,-2) {$K_5$};
\draw[fill=orange!20] (5,-1.5) .. controls (5.5,-1.5) and (5.5,-1) .. (5,-1) .. controls (5.5,-1) and (5.5,-0.5) .. (5,-0.5) .. controls (5.5,-0.5) and (6,-1) .. (6.5,-1) .. controls (6,-1) and (5.5,-1.5) .. (5,-1.5);
\draw (3.5,-2.5) .. controls (3,-2.5) and (3,-2.5) .. (3,-2) .. controls (3,-1) and (3,-1) .. (3,-0.5) .. controls (3,0) and (3,0) .. (3.5,0);
\draw (5,-2.5) .. controls (5.5,-2.5) and (4.5,-1.5) .. (5,-1.5);
\draw (4,0) .. controls (4.5,0) and (4.5,-0.5) .. (5,-0.5);
\draw (3.5,0) -- node [below] {$H\times \gamma_1'$} (4,0);
\draw (4.5,-2.5) -- node [below] {$H\times \gamma_2'$} (5,-2.5);

\end{scope}

\draw (6.5,-1) --  node[above]{$L'$}(7,-1);
\draw (8,-1) -- node[above]{$L''$} (8.5,-1);
\node at (5.65,-1) {$K_2$};
\node at (9.35,-1) {$K_4$};

\begin{scope}[shift={(5.5,0)}]

\draw[fill=orange!20] (7,-1) .. controls (7.25,-1) and (7.25,-0.5) .. (7.5,-0.5) .. controls (7.75,-0.5) and (7.75,-1) .. (8,-1) .. controls (7.75,-1) and (7.75,-1.5) .. (7.5,-1.5) .. controls (7.25,-1.5) and (7.25,-1) .. (7,-1);

\node at (7.5,-1) {$K_6$};
\end{scope}
\draw (13.5,-1) -- node[above]{$L_\eta$} (14,-1);
\end{tikzpicture}         \caption{Using the cobordisms $K_1, \ldots, K_6$ to obtain a cobordism from $L$ to $L_\eta$. The labels on the diagram give the Lagrangian in $X$ above the corresponding portion of the Lagrangian cobordism}
        \label{fig:assembly}
    \end{subfigure}
    \caption{Construction of a Lagrangian cobordism associated with a Lagrangian isotopy with integral flux class.}
    \label{fig:isotopygivescobordism}
\end{figure}
In a discussion with \`{A}lvaro Mu\~niz Brea, it was observed that the construction of \cref{lem:isotopicGivesCobordism} gives a non-orientable Lagrangian cobordism. 
\begin{corollary}\label{c:LagIsotopic}
Let $X$ be a compact symplectic manifold. Whenever $L_0, L_1\subset X$ are orientable and Lagrangian isotopic, there exists a Lagrangian cobordism $K: L_1\rightsquigarrow L_0$.
\end{corollary}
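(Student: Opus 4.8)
The plan is to deduce the corollary from \cref{lem:isotopicGivesCobordism} by a reduction through the Weinstein neighbourhood theorem together with concatenation of cobordisms. Fix a Lagrangian isotopy $L_t$, $t\in[0,1]$, from $L_0$ to $L_1$. First I would choose a fine subdivision $0=t_0<t_1<\cdots<t_N=1$ so that each $L_{t_i}$ is $C^1$-close to $L_{t_{i-1}}$; then $L_{t_i}$ is the graph of a small closed $1$-form $\eta_i$ on $L_{t_{i-1}}$ lying inside a fixed Weinstein chart. Since Lagrangian cobordisms sharing a single end glue to a Lagrangian cobordism, and a cobordism with single ends may be run in either direction, it suffices to produce, for each orientable $L$ and each small closed $1$-form $\eta$ on $L$, a Lagrangian cobordism $K_\eta\colon L\rightsquigarrow L_\eta$ to the graph $L_\eta\subset T^*L$: stringing the $K_{\eta_i}$ together, interleaved with suspensions of the Hamiltonian isotopies identifying the graphs with the actual Lagrangians $L_{t_i}$ (as in \cref{e:suspensionarea}), then produces $K\colon L_1\rightsquigarrow L_0$. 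Note that the flux of the whole isotopy may be large in $H^1(L_0;\RR)$, so $L_1$ need not be a graph over $L_0$; the passage to $C^1$-small steps is precisely what circumvents this.

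To construct $K_\eta$ I would reduce to the integral-class case handled in \cref{lem:isotopicGivesCobordism}. Orientability of $L$ makes $\ZZ$-Poincar\'e duality available, so one may choose embedded hypersurfaces $H_1,\dots,H_r\subset L$ whose Poincar\'e duals form a basis of $H^1(L;\ZZ)$ modulo torsion; writing $[\eta]=\sum_{j=1}^{r}c_j\,\mathrm{PD}[H_j]$, the coefficients $c_j\in\RR$ are small because $\eta$ is. Interpolating through the sections $L_{\eta^{(j)}}$ with $[\eta^{(j)}]=\sum_{i\le j}c_i\,\mathrm{PD}[H_i]$, it remains to pass from $L_{\eta^{(j-1)}}$ to $L_{\eta^{(j)}}$, that is, to add a single real multiple $c_j\,\mathrm{PD}[H_j]$ of an integral class. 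This is the situation of \cref{lem:isotopicGivesCobordism}, with two modifications: the increment is $c_j$ rather than $1$, and the ambient Lagrangian is $L_{\eta^{(j-1)}}$ rather than a zero section. The construction there is supported in a Weinstein neighbourhood of a normal neighbourhood $H_j\times[-1,1]$, and it adapts by replacing the zero section of $T^*[-1,1]$ with the restriction of $L_{\eta^{(j-1)}}$ — which, after a Hamiltonian isotopy, may be arranged to be pulled back from $H_j$, so that the whole local picture is a product — and by rescaling the curves $\gamma_1,\gamma_2,\gamma_3$ and the surgery/antisurgery profiles vertically in the $T^*[-1,1]$ factor so that the terminal curve $\gamma_3$ winds by $c_j$, reflecting the picture when $c_j<0$. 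Concatenating these $r$ cobordisms yields $K_\eta$, and perturbing away the double points produced by the surgeries (exactly as at the end of the proof of \cref{lem:isotopicGivesCobordism}) makes it embedded.

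The main obstacle is checking that the surgery/antisurgery construction of \cref{lem:isotopicGivesCobordism} depends on its winding parameter only through the vertical scale in the $T^*[-1,1]$ factor, so that it genuinely produces a cobordism to $L_{c\cdot\mathrm{PD}[H]}$ for \emph{every} real $c$ and not merely for integers; the attendant bookkeeping — normalising near the successive hypersurfaces $H_j$, keeping all intermediate graphs inside fixed Weinstein charts, and making the iterated gluing of cobordisms along matching ends precise — is routine but tedious. In the case actually needed, where the $L_i$ are disjoint unions of circles, $H^1$ has rank one, the hypersurface class is represented by a single point, no interpolation is necessary, and the argument shortens considerably. Finally, as noted in the remark preceding the statement, this construction typically outputs a \emph{non-orientable} cobordism even when $L_0$ and $L_1$ are orientable, which is exactly the feature exploited in the proof of \cref{t:3}.
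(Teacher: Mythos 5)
Your proposal follows essentially the same route as the paper's proof: subdivide the isotopy into small steps so that each successive Lagrangian is a graph inside a Weinstein chart of the previous one, write the class of the resulting $1$-form as a real linear combination of Poincar\'e duals of embedded hypersurfaces, run the construction of \cref{lem:isotopicGivesCobordism} with the blue-region area rescaled to each coefficient, and concatenate. The additional bookkeeping you flag (interpolating through partial sums, negative coefficients, normalising the local model over a graph rather than the zero section) is exactly what the paper leaves implicit, so there is no substantive difference in approach.
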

\begin{proof}
First, suppose that $L$ is the zero section of $T^*L$, and that $L_\eta$ is some other section given by the graph of a 1-form $\eta$. Write $[\eta]=\sum_{j=1}^k [\eta_j]$ where each $[\eta_j]=\alpha_j PD([H_j])$ for some embedded hypersurface $H_j\subset L$.
Let $L_i$ be the graph of $\sum_{j=1}^i \eta_j$. Observe that (after applying a symplectomorphism), $L_{i+1}$ is a Lagrangian section of $T^*L_i$ satisfying the criterion of \cref{lem:isotopicGivesCobordism} (where we modify the construction by scaling the area of the blue region in \cref{fig:exacthomotopy} to be $\alpha_{i+1}$). Thus, we can concatenate the Lagrangian cobordisms from \cref{lem:isotopicGivesCobordism} to construct a Lagrangian cobordism $K_\eta$ from $L$ to $L_\eta$.

For $L_0, L_1\subset X$, take a subdivision $0=t_0<\cdots < t_k=1$, so that:
\begin{itemize}
    \item There exists a Weinstein neighborhood $U$ of $L_{t_i}$ for which $L_{t_{i+1}}$ is the graph of a section $L_{\eta_i}\subset U\subset T^*L_{t_i}$.
    \item The $\eta_i$ are small enough so that the Lagrangian cobordisms $K_{\eta_i}:L_{t_i}\rightsquigarrow L_{\eta_i}$ constructed above fit entirely within $U\times \CC\subset T^*L_{t_i}\times \CC$.
\end{itemize}
Because $X$ is compact, such a subdivision can be found. Then by taking $K$ to be the concatenation of the $K_{\eta_i}$ we obtain a Lagrangian cobordism from $L_1$ to $L_0$.
\end{proof}

We say that a Lagrangian submanifold $K: L_1\rightsquigarrow L_0$ satisfying the properties of \cref{lem:isotopicGivesCobordism} is ``topologically efficient'' if
\[\underline b_1(K):=\dim(H_1(K,L_1))\]
 is close to $0$.  This can also be expressed as $\dim(H_1^{Morse}(K, -\pi_\RR|_{K}))$ (where the differential counts negative gradient flowlines and we use $\ZZ_2$ coefficients). Observe that when $K$ is topologically $L_0\times \RR$ then $\underline b_1(K)=0$.
 
 The construction given in \cref{lem:isotopicGivesCobordism} is not very topologically efficient. A more efficient (although more lengthy to explicitly describe) cobordism from $L\rightsquigarrow L'''$ can be built by generalizing Lagrangian antisurgery disks from \cite{haug2020lagrangian}. Let $\overline \gamma$ be the upper semicircle belonging to the curve $\gamma_1$ from \cref{fig:surgerysetup}, so that $H\times \overline \gamma$ is a Lagrangian with boundary cleanly meeting $H\times [-1, 1]$. This Lagrangian gives anti-surgery data generalizing the data of a Lagrangian antisurgery disk considered by \cite{haug2020lagrangian}. By performing antisurgery along $H\times \overline \gamma$, we obtain a Lagrangian $\overline L'$, which is the upper connected component of $L'$ from \cref{fig:aftersurgery}. We then perform the same exact isotopy as in \cref{fig:exacthomotopy}, and then a clean surgery to obtain $L'''$.

A different measure of the ``efficiency'' of a Lagrangian cobordism is the shadow of the cobordism. Yet another measure of distance between \emph{isotopic} Lagrangians can be computed using the flux of a symplectic isotopy, which measures the symplectic area swept by 1-cycles of $L$ over the isotopy.

There is a trade-off between topologically efficient and shadow efficient Lagrangian cobordisms. 
In the simplest example that we consider (where $\eta= \alpha PD([H])$), the constructed cobordism $K_{\eta,H}$ has shadow $\Area(K_{\eta, H})>\alpha$. Let us additionally assume that the Lagrangian cobordism $K_6$ in the construction of $K_{\eta, H}$ is trivial, so we can assume that $\Area(K_{\eta, H})\approx A\cdot\alpha$ for some fixed constant of proportionality $A$.

An easy way to lower the shadow of the cobordism we construct is to take $n$ disjoint copies of $H$, so that $\eta=\frac{\alpha}{n} PD[H]$. The construction we give has a shadow of approximately 
\[\Area(K_{\eta, nH})\approx \frac{A\cdot \alpha}{n}\]
If additionally we assume that $H$ is connected, this is proportional to $\frac{\alpha}{\underline b_1(K_{\eta, nH})}$. So, the shadow can be traded for the topological complexity of a cobordism. 

More generally, an argument due to Emmy Murphy \cite[Section 4.4]{cornea2019lagrangian} shows that given $K: L_1\rightsquigarrow L_0$, for all $C>0$ there exists a Lagrangian cobordism $K': L_1\rightsquigarrow L_0$ with $\Area(K')<C$ (but $\underline b_1(K')\gg \underline b_1(K)$). 

Given $L_0, L_1$ which are Lagrangian homotopic, let $\mathcal I(L_0, L_1):=\{i_t: L\times I\to X\;|\; i_0(L)=L_0, i_1(L)=L_1, \omega|_{L_t}=0\}$ denote the set of Lagrangian homotopies between $L_0$ and $L_1$. Let $\mathcal B$ denote the set of bases for $H_1(L, \ZZ)$. We look at the minimal flux
    \begin{align*}
        d(L_0, L_1):= \inf_{\substack{i_t\in \mathcal I(L_0, L_1)\\ B\in \mathcal B}} \sum_{c\in B}| \Flux_{i_t}(c)|.
    \end{align*}
\begin{conjecture}
    There exists a constant $A$ so that for any pair of Lagrangian isotopic submanifolds  $L_0, L_1$  and Lagrangian cobordism $K: L_1\rightsquigarrow L_0$, we have 
    \begin{equation}
        \Area(K)> \frac{A\cdot d(L_0, L_1)}{ \underline b_1(K)} .\label{eq:conj}
    \end{equation}
\end{conjecture}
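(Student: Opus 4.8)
As \eqref{eq:conj} is stated only as a conjecture, I describe the approach I would take rather than a complete argument. The engine would be an ``area exchange'' on the Lagrangian $K\subset X\times\CC$: writing the split symplectic form as $\omega_X\oplus\omega_\CC$, the fact that $K$ is Lagrangian forces $\int_\sigma\omega_X=-\int_\sigma\omega_\CC$ for every $2$-chain $\sigma\subset K$. I would apply this to $\sigma$ with $\partial\sigma=c_1-c_0$, where $c_0\subset L_0$ is a $1$-cycle at the incoming end and $c_1\subset L_1$ is its image under transport through the slices $K|_t$: if $\sigma$ is the surface swept out by this transport, then $\int_\sigma\omega_X=\Flux(c)$ for the induced Lagrangian homotopy and the class $c=[c_0]$, while $\int_\sigma\omega_\CC$ is the signed area of the $\CC$-projection of $\sigma$ counted with the multiplicity $\deg_\sigma(w)=\#(\pi_\CC|_\sigma)^{-1}(w)$. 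Since $\pi_\CC(K)$, hence $\pi_\CC(\sigma)$, lies in a simply connected region of area arbitrarily close to $\Area(K)$, this gives
\[
   \bigl|\Flux(c)\bigr|\ \le\ \Bigl(\sup_{w}\deg_\sigma(w)\Bigr)\cdot\Area(K).
\]

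I would then turn the cobordism into a homotopy: decomposing $f=\pi_\RR|_K$ at its critical values exhibits $K$ as a concatenation of elementary cobordisms, each either cylindrical --- where, following the structure theory of Lagrangian cobordisms, one expects a suspension of a Hamiltonian isotopy of the slice, carrying no flux --- or a single handle or surgery. Reading off the movie of slices and replacing each surgery by its local model produces an immersed Lagrangian homotopy $i_t^K\in\mathcal I(L_0,L_1)$. For a basis $B\in\mathcal B$ adapted to this movie, chosen so that as many classes as possible are carried through $K$ by embedded, multiplicity-one annuli, summing the displayed inequality over $B$ yields
\[
   d(L_0,L_1)\ \le\ \sum_{c\in B}\bigl|\Flux_{i_t^K}(c)\bigr|\ \le\ \Bigl(\sum_{c\in B}\sup_{w}\deg_{\sigma_c}(w)\Bigr)\cdot\Area(K),
\]
so \eqref{eq:conj} would follow once the total multiplicity $\sum_{c\in B}\sup_w\deg_{\sigma_c}(w)$ is bounded by a universal constant times $\underline b_1(K)$.

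That last bound is the hard part. A basis class surviving all critical levels is traced by a multiplicity-one annulus and picks up no flux along the cylindrical pieces, so both the flux and the excess multiplicity are concentrated at the handles and surgeries; one wants the number of reconnected basis classes and the number of sheets of $K$ over the shadow region to be controlled by $\dim H^1(K,L_0)$. The obstruction is that canceling pairs of critical points of $f$ can inflate the number of sheets and of surgeries while leaving $H^1(K,L_0)$ unchanged, so I would first replace $K$ by a representative that is optimal in its Hamiltonian-isotopy (or cobordism) class --- minimizing the number of critical values of $f$, or using the topologically efficient antisurgery models described after \cref{lem:isotopicGivesCobordism} --- and then verify an $O(1)$ multiplicity cost in each elementary local model.

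A possibly cleaner alternative would be to recast the argument inside the filtered cobordism formalism of Biran--Cornea: a cobordism $K\colon L_1\rightsquigarrow L_0$ produces a filtered iterated cone expressing $L_1$ in terms of $L_0$, with weights bounded by $\Area(K)$ and length governed by $\underline b_1(K)$, while $d(L_0,L_1)$ bounds the filtration shift of the resulting quasi-isomorphism $L_1\simeq L_0$ (detected through spectral invariants of the ends); a pigeonhole over the bars of the associated persistence module then forces one weight of size $\gtrsim d(L_0,L_1)/\underline b_1(K)$. The serious difficulty is that \eqref{eq:conj} carries no monotonicity or exactness hypothesis, whereas the Floer theory of Lagrangian cobordisms is presently available only under such assumptions; making sense of the barcode of $K$ at this level of generality --- or even identifying the precise hypotheses under which \eqref{eq:conj} should hold --- seems to me the heart of the problem.
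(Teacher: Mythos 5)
There is nothing in the paper to check your argument against: the statement \eqref{eq:conj} is posed as a conjecture and the authors do not prove it. Their only supporting evidence is \cref{exam:s1isotopy}, a family of cobordisms $K_{\lambda,n,\epsilon}\colon S^1_{\lambda/2}\rightsquigarrow S^1_{-\lambda/2}$ in $T^*S^1$ with $\Area(K_{\lambda,n,\epsilon})$ close to $\lambda/2n$ and $\underline b_1(K_{\lambda,n,\epsilon})=4n+1$, which shows the conjectured inequality is nearly saturated, together with the remark (via Murphy's argument) that shadow alone can be made arbitrarily small, so any lower bound must involve a topological quantity such as $\underline b_1(K)$. Your proposal is likewise not a proof, and you say as much; so the honest verdict is that both you and the paper stop at the same place.

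Concerning the substance of your outline: the area-exchange inequality for a $2$-chain in $K$ is sound, and using it to bound the flux of a class transported through the cobordism is a reasonable first step, but the two steps you defer are exactly where the conjecture lives. First, a cobordism does not directly furnish an element of $\mathcal I(L_0,L_1)$: at critical levels of $\pi_\RR|_K$ the slices change topology, basis classes of $H_1$ need not persist from one end to the other, and the ``movie of slices'' is only an immersed homotopy after non-canonical choices at each surgery; so the middle inequality $d(L_0,L_1)\le\sum_c|\Flux_{i^K_t}(c)|$ needs an argument, not just the definition of $d$ as an infimum (the infimum helps you only if the object you build genuinely lies in $\mathcal I(L_0,L_1)$ with cycles whose flux you control). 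Second, the step you flag as hard --- bounding the total covering multiplicity $\sum_c\sup_w\deg_{\sigma_c}(w)$ by a universal constant times $\underline b_1(K)$ --- is the entire content of the conjecture, and your proposed remedy of replacing $K$ by an ``optimal'' representative is not available as stated: the inequality must hold for the given $K$, and while $\underline b_1(K)=\dim H^1(K,L_0)$ is invariant under such modifications, the shadow is not, so minimizing critical points of $\pi_\RR|_K$ may increase $\Area$. The Floer-theoretic alternative has the difficulty you already identify (no monotonicity or exactness hypothesis in \eqref{eq:conj}), which is presumably why the authors left the statement as a conjecture rather than attempting such an argument.
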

A reverse inequality cannot hold uniformly for all choices of cobordism $K$ with fixed ends and $\underline b_1(K)>0$: increasing the topological complexity of a cobordism can make the right-hand side of \cref{eq:conj} arbitrarily small without changing $d(L_0,L_1)$. However, our construction was motivated by the following specific example exhibiting the reverse bound. 
\begin{example}
    \label{exam:s1isotopy}
    For this example, we let $X=T^*S^1$. 
Let $S^1_c\subset X$ be the Lagrangian submanifold given by the graph of $c\cdot d\theta$,
so that $d(S^1_{\lambda/2},S^1_{-\lambda/2})=\lambda$ as in \cref{fig:s1lambda}. Pick $\epsilon>0$ and $n\in \NN$. We now describe a Lagrangian null cobordism $\tilde K_{\lambda,n,\epsilon}: (S^1_{\lambda/2}\cup S^1_{-\lambda/2})\rightsquigarrow \emptyset$, with the property that 
\[\frac{\lambda}{2n}<\Area(\tilde K_{\lambda, n, \epsilon})<\frac{\lambda}{2n}+\epsilon\]
and $\underline b_1(\tilde K_{\lambda, n, \epsilon})=4n+1$.

For fixed $n\in \NN$, consider the Lagrangian $L_1$ which is the zig-zag curve drawn in \cref{fig:twosquiggles}. $L_1$ crosses the zero section $2n$ times. Let $L_2$ be the reflection of $L_1$ across the zero section. The Lagrangians $L_i$ are constructed so that the regions indicated in \cref{fig:twosquiggles} satisfy $2A+B+C=\lambda/(2n)$. The value of $C$ can be taken as small as desired.
The Lagrangian $L_1$ is Hamiltonian isotopic to the zero section, and the Hofer norm of the Hamiltonian isotopy is $A$. Let $K_1: L_1\to S^1_0$ be the suspension of this Hamiltonian isotopy. Similarly, let $K_2: L_2\to S^1_0$ be the suspension of the Hamiltonian isotopy from $L_2$ to the zero section. Then $\Area(K_1)=\Area(K_2)=A$.
\begin{figure}
    \centering
    \begin{subfigure}{.26\linewidth}
        \centering
        \begin{tikzpicture}

    \fill[gray!20]  (3,3) rectangle (6,-1);
    
\begin{scope}[]

\clip  (6,-0.42) rectangle (3,2.43);
\fill[orange!20] (3.5,1) -- (3.6,2.5) -- (4,1);
\fill[purple!20] (3.6,-0.5) -- (4,1) -- (4.4,-0.5);
\fill[yellow!20] (4.6,2.5) -- (4.5,1) -- (4.4,2.5);
\end{scope}
    
\begin{scope}[]
\draw[red, rounded corners] (3,1) -- (3.4,-0.5) -- (3.6,2.5)-- (4,1);
\draw[blue, rounded corners] (3,1) -- (3.4,2.5) -- (3.6,-0.5)  -- (4,1);
   
\end{scope}
\begin{scope}[shift={(2,0)}]
\draw[red, rounded corners] (3,1) -- (3.4,-0.5) -- (3.6,2.5)-- (4,1);
\draw[blue, rounded corners] (3,1) -- (3.4,2.5) -- (3.6,-0.5)  -- (4,1);
   
\end{scope}
\begin{scope}[shift={(1,0)}]
\draw[red, rounded corners] (3,1) -- (3.4,-0.5) -- (3.6,2.5)-- (4,1);
\draw[blue, rounded corners] (3,1) -- (3.4,2.5) -- (3.6,-0.5)  -- (4,1);
   
\end{scope}
\node at (3.64,1.4) {$A$};
\node at (4,0.1) {$B$};
\node at (4.5,2.56) {$C$};
\node[] at (3,1) {$\times$};
\node[] at (4,1) {$\times$};
\node[] at (5,1) {$\times$};

\node[circle, fill=black, scale=.2] at (3.5,1) {};
\node[circle, fill=black, scale=.2] at (4.5,1) {};
\node[circle, fill=black, scale=.2] at (5.5,1) {};

\draw[|-|] (2.5,2.4) -- node[fill=white]{$\lambda$} (2.5,-0.4);
    \draw (3,3) -- (3,-1);
    \draw (6,3) -- (6,-1);
    \draw[dashed] (3,3) -- (6,3) (3,-1) -- (6,-1);
    
    \node[blue] at (3.4,2.8) {$L_2$};
    \node[red] at (3.4,-0.7) {$L_1$};
\end{tikzpicture}         \caption{}
        \label{fig:twosquiggles}
    \end{subfigure}
    \begin{subfigure}{.22\linewidth}
        \centering
        \begin{tikzpicture}

    \fill[gray!20]  (3,3) rectangle (6,-1);
    
\begin{scope}[]

\clip  (6,-0.42) rectangle (3,2.43);
\fill[purple!20] (3.6,-0.5) -- (4,1) -- (4.4,-0.5);
\fill[yellow!20] (4.6,2.5) -- (4.5,1) -- (4.4,2.5);
\end{scope}
\begin{scope}
\clip  (6.1,-0.4) rectangle (2.7,-0.7);
\draw[fill=green!20,rounded corners] (3.5,1) -- (3.6,-0.5) -- (4,-0.5) -- (4.4,-0.5) -- (4.5,1);

\end{scope}
    
\node at (4,0.1) {$B$};
\node at (4.5,2.56) {$C$};

\node[ fill=black, scale=.2] at (3.5,1) {};
\node[ fill=black, scale=.2] at (4.5,1) {};
\node[ fill=black, scale=.2] at (5.5,1) {};

    \draw (3,3) -- (3,-1);
    \draw (6,3) -- (6,-1);
    \draw[dashed] (3,3) -- (6,3) (3,-1) -- (6,-1);

\draw[rounded corners] (3,-0.5) -- (3.4,-0.5) -- (3.6,2.5) -- (4,2.5) -- (4.4,2.5) -- (4.6,-0.5) -- (5,-0.5) -- (5.4,-0.5) -- (5.6,2.5) -- (6,2.5);
\draw[rounded corners] (3,2.5) -- (3.4,2.5) -- (3.6,-0.5) -- (4,-0.5) -- (4.4,-0.5) -- (4.6,2.5) -- (5,2.5) -- (5.4,2.5) -- (5.6,-0.5) -- (6,-0.5);

\end{tikzpicture}         \caption{}
        \label{fig:firstSurgery}
    \end{subfigure}
    \begin{subfigure}{.22\linewidth}
        \centering
        \begin{tikzpicture}

    \fill[gray!20]  (3,3) rectangle (6,-1);
    
\begin{scope}[]

\clip  (6,-0.42) rectangle (3,2.43);
\fill[purple!20] (3.6,-0.4) -- (4,1) -- (4.4,-0.4);
\fill[yellow!20] (4.6,2.4) -- (4.5,1) -- (4.4,2.4);
\end{scope}
\begin{scope}
\clip  (6.1,-0.4) rectangle (2.7,-0.7);
\draw[fill=green!20,rounded corners] (3.5,1) -- (3.6,-0.4) -- (4,-0.4) -- (4.4,-0.4) -- (4.5,1);

\end{scope}
    
\node at (4,0.1) {$B$};
\node at (4.5,2.46) {$C$};

\node[ fill=black, scale=.2] at (3.5,1) {};
\node[ fill=black, scale=.2] at (4.5,1) {};
\node[ fill=black, scale=.2] at (5.5,1) {};

    \draw (3,3) -- (3,-1);
    \draw (6,3) -- (6,-1);
    \draw[dashed] (3,3) -- (6,3) (3,-1) -- (6,-1);

\draw[rounded corners] (3,-0.4) -- (3.4,-0.4) -- (3.6,2.4) -- (4,2.4) -- (4.4,2.4) -- (4.6,-0.4) -- (5,-0.4) -- (5.4,-0.4) -- (5.6,2.4) -- (6,2.4);
\draw[rounded corners] (3,2.4) -- (3.4,2.4) -- (3.6,-0.4) -- (4,-0.4) -- (4.4,-0.4) -- (4.6,2.4) -- (5,2.4) -- (5.4,2.4) -- (5.6,-0.4) -- (6,-0.4);

\end{tikzpicture}         \caption{}
        \label{fig:isotopy}
    \end{subfigure}
    \begin{subfigure}{.22\linewidth}
        \centering
        \begin{tikzpicture}

    \fill[gray!20]  (3,3) rectangle (6,-1);

    \draw (3,3) -- (3,-1);
    \draw (6,3) -- (6,-1);
    \draw[dashed] (3,3) -- (6,3) (3,-1) -- (6,-1);

    \draw (3,-0.4) --  node[fill=gray!20] {$S^1_{-\lambda/2}$}(6,-0.4);
\draw (3,2.5) -- node[fill=gray!20] {$S^1_{\lambda/2}$}(6,2.5);
\end{tikzpicture}         \caption{}
        \label{fig:s1lambda}
    \end{subfigure}
    \caption{
        Some Lagrangian submanifolds in $X=T^*S^1$. (a) Two zig-zag Lagrangians when $n=3$. The intersection points $x_i$ are marked with $\times$, and the $y_i$ are marked with circles. (b) The slice $K_b|_0$ obtained from surgering the two zig-zags at the $x_i$. The green region has area $C+\epsilon/8$. (c) A Lagrangian obtained after applying an exact homotopy which removes the green region. (d) The slice $K_d|_{-t_0}$, which is obtained from the previous Lagrangian by surgering at the $y_i$.
        }
\end{figure}
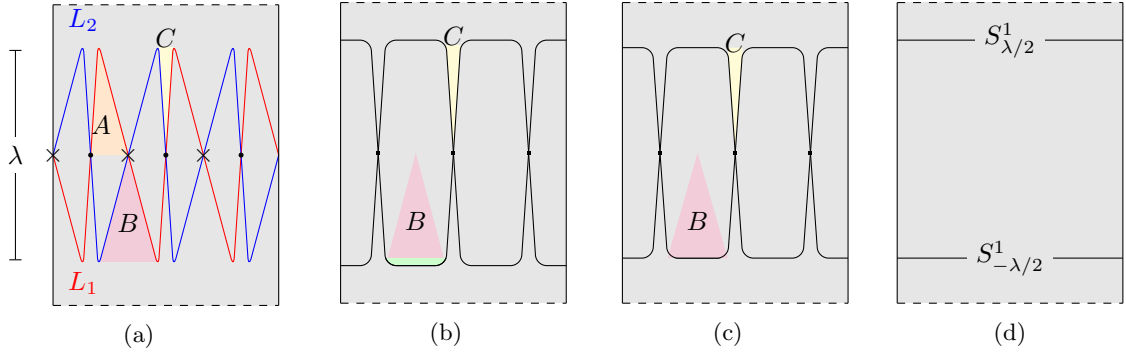
The Lagrangians $L_1$ and $L_2$ intersect transversely at $2n$ points. Label the intersection points $x_i=(i/n, 0)$ and $y_i=(i/n+1/2n, 0)$ for $i\in\{ 0, \ldots, n-1\}$.
Consider the Lagrangian submanifold $L_1 {}_{x_i}\#_{y_i} L_2$ which is obtained from $L_1$ and $L_2$ by performing surgery
\begin{itemize}
    \item with $L_2$ as the first factor and $L_1$ as the second factor for surgeries performed at $\{x_i\}$ with surgery size $B$,
    \item with $L_1$ as the first factor and $L_2$ as the second factor at surgeries performed at $\{y_i\}$ with surgery size $C$.
\end{itemize}
For these choices of Lagrangian surgeries, $L_1 {}_{x_i}\#_{y_i} L_2= (S^1_{\lambda/2}\cup S^1_{-\lambda/2})$.

We now describe an embedded Lagrangian cobordism $K_3$ whose ends are $(L_1,L_2)\rightsquigarrow L_1 {}_{x_i}\#_{y_i} L_2$. The construction is similar to the description of the surgery trace (\cref{exam:surgery}), but requires an additional argument due to the differing order of branches employed in the Polterovich surgery.
First, consider the Lagrangian $K_a:=(L_1\times \ell_1)\cup (L_2\times \ell_2)$ as drawn in \cref{fig:detail1}. All of the self-intersections of $K_a$ lie in the $t=0$ slice, where $K_a|_{t=0}=L_1\cup L_2$. 
\begin{enumerate}[series=surgeries, label=(s\arabic*)]
    \item Resolve the self-intersections of the form $\{(x_i,0)\}$ using Polterovich surgery with surgery neck size $B+C+\epsilon/8$ to obtain $K_b$, drawn in \cref{fig:detail2}. \label{s:1}
\end{enumerate}
The self-intersections of $K_b$ still live in the slice $K_b|_0$, and are in bijection with $\{y_i\}\times (\ell_1\cap \ell_2)$. We draw the slice (\cref{claim:slice}) $K_b|_0$ in \cref{fig:firstSurgery}.

Pick a value $-t_0$ so that the area bounded by the lines $\Re(z)=-t_0$, $\Re(z)=0$ and the shadow of $K_b$ is $C+\epsilon/8$ (each green region from \cref{fig:detail2}).
Using the discussion following \cite[Example 4.2.2]{hicks2021lagrangian}, reparameterize the Lagrangian cobordism $K_b$ between the slices $-t_0$ and $0$ to create a bottleneck at $-t_0$. 
Call the resulting Lagrangian $K_c$ (\cref{fig:detail3}). This Lagrangian is immersed with transverse intersections, and corresponds to the suspension of an exact homotopy between the slices $-t_0$ and $0$.  We have that $K_b|_0=K_c|_0$, and the slice $K_c|_{-t_0}$ is drawn in \cref{fig:isotopy}. Observe that \cref{fig:firstSurgery,fig:isotopy} differ by the light green regions (also of area $C+\epsilon/8$).
We now apply Lagrangian surgery to the self-intersections of $K_c$ at $t=0$ and $t=-t_0$.
\begin{enumerate}[resume=surgeries, label=(s\arabic*)]
    \item At the self-intersections in the slice $t=0$, perform surgery at all the points with a surgery neck width of $\epsilon/8$. This corresponds to the small red region in \cref{fig:detail4}. \label{s:2}
    \item At the self-intersections in the slice $t=-t_0$, perform surgery with neck width of $C$. This corresponds to the small yellow region drawn in \cref{fig:detail4}.\label{s:3}
\end{enumerate}
Call the subsequent Lagrangian submanifold $K_d$. The dark gray region belonging to $K_d$ has area $(C+\epsilon/8)-C=\epsilon/8$. The slice $K_d|_{-t_0}=L_1 {}_{x_i}\#_{y_i} L_2$ as desired. The Lagrangian submanifold $K_3$ is obtained by truncating $K_d$ at time $-t_0$ in the sense of \cite[Definition 3.1.7]{hicks2021lagrangian}. We have from inspection that 
\begin{align*}
    \Area(K_3)=&\textcolor{yellow}{C}+\textcolor{gray}{\epsilon/8}+\textcolor{red}{2\epsilon/8}+\textcolor{green}{(C+\epsilon/8)}+\textcolor{purple}{B}\\
    =&B+2C+\epsilon/2
\end{align*}
\begin{figure}
    \centering
    \begin{subfigure}{.23 \linewidth}
        \centering
        \begin{tikzpicture}
\fill[gray!20]  (-3,-1.5) rectangle (0.5,0.5);
    \begin{scope}[shift={(0,0)}]
        \node at (-2.5,0) {$L_1\times \ell_1$};
        \node at (-2.5,-1) {$L_2\times \ell_2$};
        \draw (-2.25,0.5) -- (-0.25,-1.5) (-0.25,0.5) -- (-2.25,-1.5);
        
        \end{scope}
        
\draw[dotted] (-1.25,0.5) -- (-1.25,-1.5);
\node[below] at (-1.25,-1.5) {$0$};
\node at (0,0) {$\mathbb C$};
\end{tikzpicture}         \caption{$K_a$}
        \label{fig:detail1}
    \end{subfigure}
    \begin{subfigure}{.23 \linewidth}
        \centering

\begin{tikzpicture}
\fill[gray!20]  (-3,-1.5) rectangle (0.5,0.5);
    \begin{scope}[shift={(-3.75,0)}]
        \fill[purple!20] (1.5,0.5) .. controls (2,0) and (2.5,-0.5) .. (2.5,-0.5) .. controls (2.5,-0.5) and (2,-1) .. (1.5,-1.5) .. controls (2,-1) and (2,0) .. (1.5,0.5);    
	\fill[green!20] (2,0) -- (2,-1) -- (2.5,-0.5);
	\draw (1.5,0.5) .. controls (2,0) and (2.5,-0.5) .. (2.5,-0.5) .. controls (2.5,-0.5) and (2,-1) .. (1.5,-1.5) .. controls (2,-1) and (2,0) .. (1.5,0.5);    

        \fill[purple!20] (3.5,0.5) .. controls (3,0) and (2.5,-0.5) .. (2.5,-0.5) .. controls (2.5,-0.5) and (3,-1) .. (3.5,-1.5) .. controls (3,-1) and (3,0) .. (3.5,0.5);
        \fill[green!20] (3,0) -- (3,-1) -- (2.5,-0.5);
        \draw (3.5,0.5) .. controls (3,0) and (2.5,-0.5) .. (2.5,-0.5) .. controls (2.5,-0.5) and (3,-1) .. (3.5,-1.5) .. controls (3,-1) and (3,0) .. (3.5,0.5);

        \end{scope}

\node at (0,0) {$\mathbb C$};

\draw[dotted] (-1.75,0.5) -- (-1.75,-1.5);
\node[below] at (-1.75,-1.5) {$-t_0$};
\end{tikzpicture}         \caption{$K_b$}
        \label{fig:detail2}
    \end{subfigure}
    \begin{subfigure}{.23 \linewidth}
        \centering

\begin{tikzpicture}
\fill[gray!20]  (-3,-1.5) rectangle (0.5,0.5);

\node at (0,0) {$\mathbb C$};

\draw[dotted] (-1.75,0.5) -- (-1.75,-1.5);
\node[below] at (-1.75,-1.5) {$-t_0$};
    \begin{scope}[shift={(-3.75,0)}]

        \fill[purple!20] (3.5,0.5) .. controls (3,0) and (2.5,-0.5) .. (2.5,-0.5) .. controls (2.5,-0.5) and (3,-1) .. (3.5,-1.5) .. controls (3,-1) and (3,0) .. (3.5,0.5);
        \fill[green!20] (3,0) -- (3,-1) -- (2.5,-0.5);
        \draw (3.5,0.5) .. controls (3,0) and (2.5,-0.5) .. (2.5,-0.5) .. controls (2.5,-0.5) and (3,-1) .. (3.5,-1.5) .. controls (3,-1) and (3,0) .. (3.5,0.5);

        \draw[fill=green!20] (2.5,-0.5) .. controls (2.4,-0.6) and (2.4,-1.2) .. (2.3,-1.2) .. controls (2.2,-1.2) and (2.1,-0.6) .. (2,-0.5) .. controls (2.1,-0.4) and (2.2,0.1) .. (2.3,0.1) .. controls (2.4,0.1) and (2.4,-0.4) .. (2.5,-0.5);
        \end{scope}
\begin{scope}[]
\draw[fill=purple!20] (-1.8,-0.5) .. controls (-1.87,-1) and (-1.9,-1.4) .. (-2,-1.4) .. controls (-2.1,-1.4) and (-2.15,-1.4) .. (-2.25,-1.5) .. controls (-1.75,-1) and (-1.75,0) .. (-2.25,0.5) .. controls (-2.15,0.4) and (-2.1,0.3) .. (-2,0.3) .. controls (-1.9,0.3) and (-1.86,0) .. (-1.8,-0.5);

\draw[fill=purple!20] (-1.8,-0.5) .. controls (-1.78,-1) and (-1.78,-1) .. (-1.75,-0.5);
\draw[fill=purple!20] (-1.8,-0.5) .. controls (-1.78,0) and (-1.78,0) .. (-1.75,-0.5);

\end{scope}

\end{tikzpicture}         \caption{$K_c$}
        \label{fig:detail3}
    \end{subfigure}
    \begin{subfigure}{.23 \linewidth}
        \centering

\begin{tikzpicture}
\fill[gray!20]  (-3,-1.5) rectangle (0.5,0.5);

\draw[fill=red!20] (-1.32,-0.3) .. controls (-1.3,-0.4) and (-1.2,-0.4) .. (-1.05,-0.3) .. controls (-1.2,-0.4) and (-1.2,-0.6) .. (-1.05,-0.7) .. controls (-1.2,-0.6) and (-1.3,-0.6) .. (-1.32,-0.7);

\node at (0,0) {$\mathbb C$};

\begin{scope}[]
\draw[fill=purple!20] (-1.8,-0.5) .. controls (-1.87,-1) and (-1.9,-1.4) .. (-2,-1.4) .. controls (-2.1,-1.4) and (-2.15,-1.4) .. (-2.25,-1.5) .. controls (-1.75,-1) and (-1.75,0) .. (-2.25,0.5) .. controls (-2.15,0.4) and (-2.1,0.3) .. (-2,0.3) .. controls (-1.9,0.3) and (-1.86,0) .. (-1.8,-0.5);

\draw[fill=purple!20] (-1.8,-0.5) .. controls (-1.78,-1) and (-1.78,-1) .. (-1.75,-0.5);
\draw[fill=purple!20] (-1.8,-0.5) .. controls (-1.78,0) and (-1.78,0) .. (-1.75,-0.5);

\end{scope}
\draw[dotted] (-1.75,0.5) -- (-1.75,-1.5);
\node[below] at (-1.75,-1.5) {$-t_0$};
    \begin{scope}[shift={(-3.75,0)}]

        \fill[purple!20] (3.5,0.5) .. controls (3,0) and (2.5,-0.5) .. (2.5,-0.5) .. controls (2.5,-0.5) and (3,-1) .. (3.5,-1.5) .. controls (3,-1) and (3,0) .. (3.5,0.5);
        \fill[green!20] (3,0) -- (3,-1) -- (2.5,-0.5);
        \draw (3.5,0.5) .. controls (3,0) and (2.5,-0.5) .. (2.5,-0.5) .. controls (2.5,-0.5) and (3,-1) .. (3.5,-1.5) .. controls (3,-1) and (3,0) .. (3.5,0.5);

        \draw[fill=gray!50] (2.5,-0.5) .. controls (2.4,-0.6) and (2.4,-1.2) .. (2.3,-1.2) .. controls (2.2,-1.2) and (2.1,-0.6) .. (2,-0.5) .. controls (2.1,-0.4) and (2.2,0.1) .. (2.3,0.1) .. controls (2.4,0.1) and (2.4,-0.4) .. (2.5,-0.5);
      \draw[fill=yellow!20] (2,-0.5) .. controls (2.1,-0.4) and (2.2,0.1) .. (2.3,0.1) .. controls (2.2,-0.4) and (2.2,-0.7) .. (2.3,-1.2) .. controls (2.2,-1.2) and (2.1,-0.6) .. (2,-0.5);
\end{scope}

\end{tikzpicture}         \caption{$K_d$}
        \label{fig:detail4}
    \end{subfigure}
    \caption{Some shadows of Lagrangian cobordisms. (a) Two trivial suspensions. (b) Surgering the Lagrangians at their intersection. The green regions have area $C+\epsilon/8$, while the purple region has area $B$. (c) Flipping the left end to create a bottleneck. Areas are preserved. (d) Applying a surgery at the intersections in the slices $-t_0$ and $0$. The yellow region has area $C$.}
\end{figure}
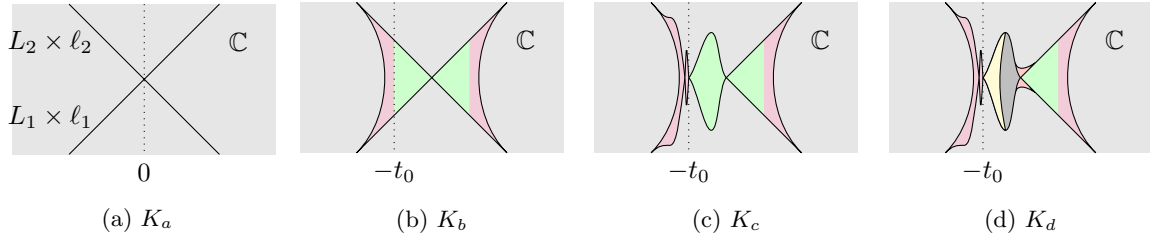

Finally, let $K_4: (S^1_0, S^1_0)\rightsquigarrow \emptyset$ be a U-shaped null cobordism.
The concatenation $\tilde K_{\lambda, n, \epsilon}:=K_4\circ (K_1\cup K_2)\circ (K_3)^{-1}: (S^1_{\lambda/2}\cup S^1_{-\lambda/2})\rightsquigarrow \emptyset$ drawn in \cref{fig:concatenation2} is a null cobordism. The concatenation is performed so the ``gap'' in the middle of the cobordism has area $\epsilon/2$. In this way, 
\begin{align*}
   \frac{\lambda}{2n}=& 2A+B+C<\Area(\tilde K_{\lambda, n, \epsilon})\\
   =&
   \Area(K_1)+\Area(K_2)+\Area(K_3)+\Area(\text{Gap in \cref{fig:concatenation2}})\\
   =&A+A+(B+2C+\epsilon/2)+\epsilon/2= \frac{\lambda}{2n}+\epsilon+C \stepcounter{equation}\tag{\theequation}
    \label{eq:shadowinequality}
\end{align*}
We recall that $\epsilon, C$ were independently chosen as small as desired. For convenience, we merge these two constants into one, so that $\Area(\tilde K_{\lambda, n, \epsilon})<\lambda/(2n)+\epsilon$.
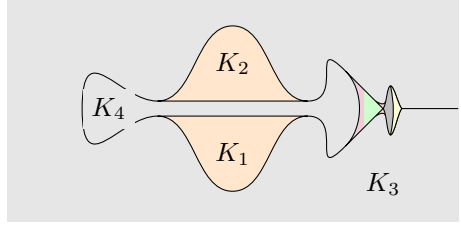
\begin{figure}
    \centering
    \begin{tikzpicture}[scale=1.2,xscale=-1]

    \fill[gray!20]  (12,1) rectangle (18,-2);
    \begin{scope}[shift={(11.75,-0.25)}, scale=0.5]    
        
\draw[fill=red!20] (2.43,-0.3) .. controls (2.45,-0.4) and (2.55,-0.4) .. (2.7,-0.3) .. controls (2.55,-0.4) and (2.55,-0.6) .. (2.7,-0.7) .. controls (2.55,-0.6) and (2.45,-0.6) .. (2.43,-0.7);

        \fill[purple!20] (3.5,0.5) .. controls (3,0) and (2.5,-0.5) .. (2.5,-0.5) .. controls (2.5,-0.5) and (3,-1) .. (3.5,-1.5) .. controls (3,-1) and (3,0) .. (3.5,0.5);
        \fill[green!20] (3,0) -- (3,-1) -- (2.5,-0.5);
        \draw (3.5,0.5) .. controls (3,0) and (2.5,-0.5) .. (2.5,-0.5) .. controls (2.5,-0.5) and (3,-1) .. (3.5,-1.5) .. controls (3,-1) and (3,0) .. (3.5,0.5);

        \draw[fill=gray!50] (2.5,-0.5) .. controls (2.4,-0.6) and (2.4,-1.2) .. (2.3,-1.2) .. controls (2.2,-1.2) and (2.1,-0.6) .. (2,-0.5) .. controls (2.1,-0.4) and (2.2,0.1) .. (2.3,0.1) .. controls (2.4,0.1) and (2.4,-0.4) .. (2.5,-0.5);
      \draw[fill=yellow!20] (2,-0.5) .. controls (2.1,-0.4) and (2.2,0.1) .. (2.3,0.1) .. controls (2.2,-0.4) and (2.2,-0.7) .. (2.3,-1.2) .. controls (2.2,-1.2) and (2.1,-0.6) .. (2,-0.5);
\end{scope}

    \begin{scope}[shift={(-1.5,0.9)}]
    
    \draw[fill=orange!20] (15.5,-1.5) .. controls (16,-1.5) and (16,-1.5) .. (16.5,-1.5) .. controls (17,-1.5) and (17,-1.5) .. (17.5,-1.5) .. controls (17,-1.5) and (17,-2.5) .. (16.5,-2.5) .. controls (16,-2.5) and (16,-1.5) .. (15.5,-1.5);
    
    \node at (16.5,-2) {$K_2$};
    \end{scope}
    
    \begin{scope}[shift={(-1.5,1.1)}]
    
    \draw[fill=orange!20] (15.5,-1.5) .. controls (16,-1.5) and (16,-0.5) .. (16.5,-0.5) .. controls (17,-0.5) and (17,-1.5) .. (17.5,-1.5) .. controls (17,-1.5) and (17,-1.5) .. (16.5,-1.5) .. controls (16,-1.5) and (16,-1.5) .. (15.5,-1.5);
    
    \node at (16.5,-1) {$K_1$};
    \end{scope}

    \draw (13.5,0) .. controls (14,0.5) and (13.5,-0.4) .. (14,-0.4);
    \draw (13.5,-1) .. controls (14,-1.5) and (13.5,-0.6) .. (14,-0.6);

    \draw (16,-0.6) .. controls (16.5,-0.6) and (17,-1.5) ..  (17,-0.5)node[fill=gray!20, right] {$K_4$} .. controls (17,0.5) and (16.5,-0.4) .. (16,-0.4);

    \node at (13,-1.5) {$K_3$};
\draw (12,-0.5) -- (12.75,-0.5);
\end{tikzpicture}
     \caption{Concatenation to form a null-cobordism of $S^1_{-\lambda/2}\cup S^1_{\lambda/2}$. The center region has area $\epsilon/2$.}
    \label{fig:concatenation2}
\end{figure}
By rearranging the ends of $\tilde K_{\lambda, n, \epsilon}$, we obtain a Lagrangian cobordism $K_{\lambda,n,\epsilon}: S^1_{\lambda/2}\rightsquigarrow S^1_{-\lambda/2}$. 

We now compute $\underline b_1$ using the ordinary Euler characteristic of the non-compact cobordisms.
$K_3$ is obtained from $(L_1\times \ell_1)\cup (L_2\times \ell_2)$, which has Euler characteristic zero, by resolving $2n$ transverse double points corresponding to the points $x_i$ and $y_i$.
Resolving one transverse double point of a surface removes two disks and glues in an annulus, and therefore decreases the Euler characteristic by $2$.
Hence
\[
    \chi(K_3)=0-2(2n)=-4n.
\]
The additional pieces used to form $\tilde K_{\lambda,n,\epsilon}$ from $K_3$ have Euler characteristic zero and are glued along circles, so they do not change the Euler characteristic.
Rearranging the ends also leaves the underlying surface unchanged.
Therefore
\[
    \chi(K_3)
    =
    \chi(\tilde K_{\lambda,n,\epsilon})
    =
    \chi(K_{\lambda,n,\epsilon})
    =
    -4n.
\]
For the null cobordism $\tilde K_{\lambda,n,\epsilon}$, the pair defining $\underline b_1$ is taken relative to the two positive ends.
Since these are all of the ends, the compactification has $H_2(\tilde K_{\lambda,n,\epsilon},S^1_{\lambda/2}\cup S^1_{-\lambda/2};\ZZ_2)\cong \ZZ_2$, so the same Euler-characteristic computation gives $\underline b_1(\tilde K_{\lambda,n,\epsilon})=4n+1$.
Since $K_{\lambda,n,\epsilon}$ has one positive end and one negative end, the long exact sequence of the pair $(K_{\lambda,n,\epsilon},S^1_{\lambda/2})$, with $\mathbb Z_2$-coefficients gives
\[
    \underline b_1(K_{\lambda,n,\epsilon})
    =
    \dim H_1(K_{\lambda,n,\epsilon},S^1_{\lambda/2})
    =
    -\chi(K_{\lambda,n,\epsilon})
    =
    4n.
\]

From this construction, we obtain Lagrangians satisfying 
\[2\frac{d(S^1_{\lambda/2},S^1_{-\lambda/2})}{\underline b_1(K_{\lambda, n, \epsilon})}=\frac{\lambda}{2n}<\Area(K_{\lambda, n,\epsilon})<\frac{\lambda}{2n}+\epsilon= 2\frac{d(S^1_{\lambda/2},S^1_{-\lambda/2})}{\underline b_1(K_{\lambda, n,\epsilon})}+\epsilon.\]
\end{example}
\subsection{Application:}

We now use the previous discussion to explain Construction \ref{t:3}.

\begin{proof}

Let $n$ be the smallest integer greater than $\lambda/2$, and choose $\epsilon$ small enough so that 
\[\frac{\lambda-\epsilon}{2n}+2\epsilon<1.\]
Consider the decomposition
\[S^2_1\times S^2_\lambda=(D^2_\epsilon\times S^2_{\lambda})\cup(D^2_{1-\epsilon} \times S^2_\lambda),\] where $D^2_A$ is the symplectic disk with area $A$. In the first component, use \cite{mikhalkin2019examples} to construct $K_1$, the Lagrangian lift of the tropical curve drawn on the left-hand side of \cref{fig:twohalves}. The topology of $K_1$ is a two-punctured sphere whose boundary is  $(\{\bullet\}\times (S^1_{(-\lambda+\epsilon)/2}\cup S^1_{(\lambda-\epsilon)/2}))\subset \{\bullet\}\times S^2_\lambda$.
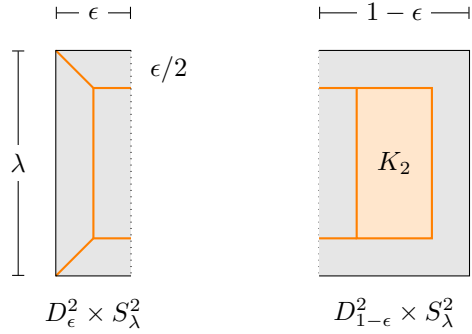
\begin{figure}
\centering
\begin{tikzpicture}

\fill[gray!20]  (-4,1.5) rectangle (-3,-1.5);
\draw[orange, thick] (-4,1.5) -- (-3.5,1) -- (-3,1);
\draw[orange, thick] (-4,-1.5) -- (-3.5,-1) -- (-3,-1);
\draw[orange, thick] (-3.5,-1) -- (-3.5,1);
\draw (-3,1.5) -- (-4,1.5) -- (-4,-1.5) -- (-3,-1.5);
\draw[dotted] (-3,1.5) -- (-3,-1.5);
\node at (-3.5,-2) {$D^2_\epsilon\times S^2_\lambda$};
\draw[dotted] (-0.5,1.5) -- (-0.5,-1.5);
\fill[gray!20]  (-0.5,1.5) rectangle (1.5,-1.5);
\draw (-0.5,1.5) -- (1.5,1.5) -- (1.5,-1.5) -- (-0.5,-1.5);
\draw[orange, thick] (-0.5,-1) -- (0,-1);
\draw[orange, thick] (-0.5,1) -- (0,1);
\draw[fill=orange!20, draw=orange, thick]  (1,-1) rectangle (0,1);
\node at (0.5,0) {$K_{2}$};
\node at (0.5,-2) {$D^2_{1-\epsilon}\times S^2_\lambda$};
\draw[|-|] (-4.5,1.5) -- node[fill=white]{$\lambda$} (-4.5,-1.5);

\draw[|-|] (-4,2) -- node[fill=white]{$\epsilon$} (-3,2);

\draw[|-|] (-2.5,1.5) -- node[fill=white]{$\epsilon/2$} (-2.5,1);

\draw[|-|] (-0.5,2) -- node[fill=white]{$1-\epsilon$} (1.5,2);
\end{tikzpicture} \caption{Dividing the $S^2_1\times S^2_\lambda$ into two portions. The left-hand side $K_1$ is a 2-punctured sphere. The right-hand side $K_2$ is described in \cref{exam:s1isotopy}.}
\label{fig:twohalves}
\end{figure}
Let $B^*_{\lambda}S^1$ be the portion of the cotangent bundle of $S^1$ between $S^1_{-\lambda/2}$ and $S^1_{\lambda/2}$.
On the right-hand side of \cref{fig:twohalves}, consider the Lagrangian null cobordism:
\[\tilde K_{\lambda-\epsilon, n,\epsilon}: S^1_{(-\lambda+\epsilon)/2}\cup S^1_{(\lambda-\epsilon)/2} \rightsquigarrow \emptyset\] from \cref{exam:s1isotopy}. Observe that
\[\Area(\tilde K_{\lambda-\epsilon, n,\epsilon})<\frac{\lambda-\epsilon}{2n}+\epsilon<1-\epsilon.\]
Therefore, after interchanging the factors in the local chart, this Lagrangian cobordism fits inside $D^2_{1-\epsilon}\times B^*_\lambda S^1\subset D^2_{1-\epsilon}\times S^2_\lambda$. We call this Lagrangian $K_2$.
$K_1$ and $K_2$ share the same boundary, so we can define $K=K_1\cup K_2$. 
This lies in the $\ZZ/2\ZZ$ homology class of $[\{\bullet\}\times S^2_\lambda]$ and has a non-orientable genus $4n+2$.

\end{proof}
 
\printbibliography
\end{document}